\newtheorem{prop}{Proposition}[section]
\newtheorem{lemma}[prop]{Lemma}
\newtheorem{rem}[prop]{Remark}
\newtheorem{thm}[prop]{Theorem}
\newtheorem{cor}[prop]{Corollary}
\newtheorem{df}[prop]{Definition}
\newtheorem*{thm*}{Theorem}
\newtheorem*{corollary*}{Corollary}
\numberwithin{equation}{section}
\newcommand{\Z}{\mathbb{Z}}
\newcommand{\C}{\mathbb{C}}
\newcommand{\bS}{S}
\newcommand{\bB}{B}
\renewcommand{\emptyset}{\varnothing}
\newcommand{\id}{\mathrm{id}}
\title[Trimmable graph C*-algebras]{\vspace*{-15mm}An equivariant pullback structure of\\ 
trimmable graph C*-algebras} 
\author[F.~Arici]{Francesca Arici} 
\address[F.~Arici]{Max-Planck-Institut f\"ur Mathematik in den Naturwissenschaften, Inselstr.\ 22, 04103 Leipzig, Leipzig, Germany.}
\email{francesca.arici@mis.mpg.de}
\author[F.~D'Andrea]{Francesco D'Andrea} 
\address[F.~D'Andrea]{Universit\`a di Napoli ``Federico II'' and I.N.F.N. Sezione di Napoli, Complesso MSA, Via Cintia, 80126 Napoli, Italy.}
\email{francesco.dandrea@unina.it}
\author[P. M.~Hajac]{Piotr M.~Hajac}
\address[P. M.~Hajac]{Instytut Matematyczny, Polska Akademia Nauk, ul. \'Sniadeckich 8, Warszawa, 00-656 Poland}
\email{pmh@impan.pl }
\author[M.~Tobolski]{Mariusz Tobolski}
\address[M.~Tobolski]{Instytut Matematyczny, Polska Akademia Nauk, ul. \'Sniadeckich 8, Warszawa, 00-656 Poland}
\email{mtobolski@impan.pl }
\subjclass[2010]{46L80, 46L85.}
\begin{document}
\baselineskip14.75pt
\parskip=0.5\baselineskip
\parindent=7.5mm

\begin{abstract}
We prove that the graph C*-algebra $C^*(E)$ of a trimmable
 graph $E$ is $U(1)$-equi\-va\-riant\-ly isomorphic to a pullback C*-algebra of
a subgraph C*-algebra $C^*(E'')$ and the C*-algebra of functions on a circle tensored with another 
subgraph C*-algebra~$C^*(E')$. This allows us to unravel the structure and K-theory of the fixed-point subalgebra 
$C^*(E)^{U(1)}$ through the (typically  simpler) C*-algebras~$C^*(E')$, $C^*(E'')$ and $C^*(E'')^{U(1)}$.
As examples of trimmable graphs, we consider one-loop extensions of 
the standard graphs encoding respectively the
Cuntz algebra $\mathcal{O}_2$ and the Toeplitz algebra $\mathcal{T}$. Then we analyze equivariant 
pullback structures of 
trimmable graphs yielding  the C*-algebras of the 
Vaksman--Soibelman quantum sphere $S^{2n+1}_q$ and  the quantum 
lens space~$L_q^3(l; 1,l)$, respectively. 
\end{abstract}
\maketitle
\vspace*{-5mm}
\tableofcontents
%\clearpage

\section{Introduction and preliminaries}%\vspace*{15mm}
\noindent
Graph C*-algebras are remarkable examples of 
``operator algebras that one can see"~\cite{R05}.
In particular, they proved to be extremely useful in determining the K-theory of noncommutative deformations
of interesting topological spaces. They come equipped with a natural circle action (called the gauge action),
and their gauge-invariant subalgebras describe some fundamental examples of noncommutative 
topology~\cite{BS16,HS02,HS03}.

The goal of this paper is to present $U(1)$-C*-algebras from a large class of trimmable graph C*-algebras
as $U(1)$-equivariant pullbacks, so as to determine a pullback structure of their fixed-point subalgebras.
The pullback structure yields a  Mayer--Vietoris six-term exact sequence in K-theory allowing us 
to express the even-K-group of the fixed-point subalgebra of a trimmable graph
C*-algebra in terms of the  K-theory  of simpler C*-algebras.
More precisely, we have:
\begin{thm*}
Let $E$ be a $\bar{v}$-trimmable graph. Denote by $E'$ the subgraph of $E$ obtained  
by removing the  unique outgoing edge (loop) of~$\bar{v}$, and by $E''$ the subgraph of $E'$
obtained  by removing the vertex $\bar{v}$ and all edges ending in~$\bar{v}$.
Then there exist $U(1)$-equivariant *-homomorphisms making the following diagram
\begin{equation*}
\xymatrix{
&C^*(E)\ar[ld]\ar[rd]&\\
C^*(E'')\ar[dr]&& 
C^*(E')\otimes C(S^1)
\ar[dl]\\
&C^*(E'')\otimes C(S^1)&
}
\end{equation*}
a pullback diagram of $U(1)$-C*-algebras. Here $C^*(E)$ and $C^*(E'')$ are considered as
$U(1)$-C*-algebras with respect to the gauge action, whereas the tensor product algebras
are viewed as $U(1)$-C*-algebras with respect to the standard $U(1)$-action on~$C(S^1)$.
\end{thm*}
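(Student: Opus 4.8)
The strategy is to build all four maps of the square by hand from the Cuntz--Krieger generators, verify that the resulting square commutes $U(1)$-equivariantly, and then show that the induced map from $C^*(E)$ into the associated pullback $C^*$-algebra is a $U(1)$-equivariant isomorphism. Write $\{p_v\}_{v\in E^0}$, $\{s_f\}_{f\in E^1}$ for the canonical generators of $C^*(E)$, let $e$ be the loop at $\bar v$, and let $u$ be the unitary generator of $C(S^1)$. Because $e$ is the \emph{only} edge emitted by $\bar v$, the Cuntz--Krieger relation at $\bar v$ is $p_{\bar v}=s_es_e^*$, so $s_e$ is a unitary element of the corner $p_{\bar v}C^*(E)p_{\bar v}$. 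The central move is to use the universal property of $C^*(E)$ to define
\[
\pi\colon C^*(E)\longrightarrow C^*(E')\otimes C(S^1),\qquad
p_v\mapsto p_v\otimes 1,\quad
s_f\mapsto s_f\otimes u\ \ (f\neq e),\quad
s_e\mapsto p_{\bar v}\otimes u ;
\]
the only relations requiring a thought are those at $\bar v$, and these hold because $p_{\bar v}\otimes u$ is a unitary in the corner $(p_{\bar v}\otimes 1)\big(C^*(E')\otimes C(S^1)\big)(p_{\bar v}\otimes 1)$, while the remaining relations are exactly those holding in $C^*(E')$. The second leg $\rho\colon C^*(E)\to C^*(E'')$ is the evident quotient, sending $p_v,s_f$ to the like-named generators of $C^*(E'')$ when $v,f$ survive in $E''$ and to $0$ otherwise; here the trimmability hypothesis on $E$ is precisely what guarantees that no vertex loses \emph{all} its outgoing edges on passing to $E''$, so that the truncated Cuntz--Krieger relations are the defining relations of $C^*(E'')$ and $\rho$ is well defined. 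The bottom maps are $\sigma\otimes\id_{C(S^1)}$, where $\sigma\colon C^*(E')\to C^*(E'')$ is the analogous quotient, and the ``degree-raising'' embedding $C^*(E'')\to C^*(E'')\otimes C(S^1)$, $p_v\mapsto p_v\otimes 1$, $s_f\mapsto s_f\otimes u$. A direct check on generators shows the square commutes, and each of the four maps is $U(1)$-equivariant for the gauge actions on the graph $C^*$-algebras and the action on the tensor products that is trivial on the graph factor and the standard rotation on $C(S^1)$.

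The universal property of the pullback then produces a $U(1)$-equivariant $*$-homomorphism $\Phi=(\pi,\rho)$ from $C^*(E)$ to the pullback $C^*$-algebra $P$, and it remains to prove that $\Phi$ is bijective. For injectivity I would invoke the gauge-invariant uniqueness theorem: $\Phi$ is equivariant by construction, and $\Phi(p_v)\neq 0$ for every vertex $v$, since $\rho(p_v)=p_v\neq 0$ when $v\neq\bar v$ and $\pi(p_{\bar v})=p_{\bar v}\otimes 1\neq 0$; hence $\Phi$ is injective. (Alternatively one can argue directly that $p_{\bar v}$ is a full projection in the ideal $I\subseteq C^*(E)$ it generates, that $\pi$ restricts to an isomorphism of the corner $p_{\bar v}C^*(E)p_{\bar v}\cong C(S^1)$, hence $\pi$ is injective on $I$; since $I=\ker\rho$ this gives $\ker\Phi=\ker\pi\cap\ker\rho=0$.)

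For surjectivity I would use the standard criterion for when a $*$-homomorphism into a pullback $C^*$-algebra over a surjective map is surjective: since $\sigma\otimes\id_{C(S^1)}$ is onto, $\Phi$ is onto once (i) $\rho$ is onto --- clear, as its range contains all generators of $C^*(E'')$ --- and (ii) $\ker(\sigma\otimes\id_{C(S^1)})\subseteq\pi(\ker\rho)$. Since $I\subseteq\ker\rho$, for (ii) it is enough to prove $\ker(\sigma\otimes\id_{C(S^1)})=\pi(I)$. Now $I$ is the closed linear span of the elements $s_\mu s_\nu^*$ with $\mu,\nu$ paths ending at $\bar v$, and the key combinatorial observation is that, $e$ being the unique edge out of $\bar v$, every such path has the form $\mu''fe^{j}$ with $\mu''$ a path in $E''$, $f$ an edge into $\bar v$, and $j\geq 0$ (or is a pure power $e^{j}$). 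Computing $\pi$ on these spanning elements and letting the $e$-powers vary identifies $\pi(I)$ with $I'\otimes C(S^1)$, where $I'$ is the ideal of $C^*(E')$ generated by $p_{\bar v}$; and since $C(S^1)$ is nuclear, $\ker(\sigma\otimes\id_{C(S^1)})=(\ker\sigma)\otimes C(S^1)$, so one is reduced to the identity $\ker\sigma=I'$. This I would obtain by constructing, via the universal property of $C^*(E'')$, a splitting $C^*(E'')\to C^*(E')/I'$ of $\sigma$, forcing $\sigma$ to descend to an isomorphism $C^*(E')/I'\cong C^*(E'')$ (the same device re-proves $\ker\rho=I$, completing the alternative injectivity argument). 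Together with (i) this yields surjectivity, so $\Phi$ is a $U(1)$-equivariant isomorphism and the square is a pullback diagram of $U(1)$-$C^*$-algebras.

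The step I expect to be the main obstacle is (ii): controlling the ideal $I$ and its image $\pi(I)$ rests on the path description above together with the identity $\ker\sigma=I'$, and it is exactly at these points (and in the well-definedness of $\rho$ and $\sigma$) that trimmability is needed, to ensure that removing $\bar v$ and the edges ending in it collapses nothing beyond $\bar v$ itself. Verifying that $\pi,\rho$ are well defined, that the square commutes, and that all maps are $U(1)$-equivariant is routine bookkeeping; the substance of the argument is the two-sided control of $\Phi$ --- triviality of $\ker\Phi$ and exhaustion of the pullback by the range of $\Phi$.
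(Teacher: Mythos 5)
Your proposal is correct and follows essentially the same route as the paper: the same four maps (quotient map, the map sending $S_{\bar e}\mapsto P_{\bar v}\otimes u$, the gauge coaction, and $\pi_2\otimes\id$), the gauge-invariant uniqueness theorem for injectivity, and Pedersen's pullback criterion reduced to the kernel inclusion $\ker(\pi_2\otimes\id)\subseteq f(\ker\pi_1)$, which both arguments verify by the same device of inserting powers of $S_{\bar e}$ into $S_xS_y^*$ to realize an arbitrary degree in the $C(S^1)$ tensorand. The only cosmetic differences are that the paper applies the uniqueness theorem to $f$ alone rather than to the map into the pullback, and cites the Bates--Pask--Raeburn--Szyma\'nski description of gauge-invariant ideals for $\ker\pi_1=\overline{I_E(\bar v)}$ and $\ker\pi_2=\overline{I_{E'}(\bar v)}$ instead of re-deriving it via a splitting.
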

\begin{corollary*} 
The following Mayer--Vietoris six-term sequence in \mbox{K-theory} is exact.
\begin{equation*}
\xymatrixcolsep{2.5pc}
\xymatrix{
K_0(C^*(E)^{U(1)}) \ar[r]^-{}
& K_0(C^*(E'')^{U(1)})\oplus K_0(C^*(E')) \ar[r]^-{}
& K_0(C^*(E'')) \ar[d]\\
K_1(C^*(E''))\ar[u]^{\partial_{10}}
& K_1(C^*(E')) \ar[l]^-{}
& 0. \ar[l]^-{}}
\end{equation*}
\end{corollary*}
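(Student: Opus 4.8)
The plan is to apply the fixed-point functor $A\mapsto A^{U(1)}$ to the pullback diagram of the Theorem and then feed the resulting pullback of C*-algebras into the Mayer--Vietoris six-term sequence in K-theory. The point to check first is that passing to $U(1)$-fixed points carries the equivariant pullback to an honest pullback of C*-algebras. The reason is that a commutative square is a pullback exactly when the canonical map from its apex to the fibre product $\{(b,c): b\text{ and }c\text{ have the same image in the common codomain}\}$ is an isomorphism; the fibre product carries the diagonal $U(1)$-action, under which a pair is fixed precisely when both its entries are, so the fixed-point subalgebra of the fibre product equals the fibre product of the fixed-point subalgebras; and since the canonical map is $U(1)$-equivariant it remains an isomorphism after restriction to fixed points. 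Moreover the $U(1)$-action on the two tensor-product algebras moves only the $C(S^1)$-leg, and the fixed-point algebra of $C(S^1)$ under the standard (rotation) action is $\C 1$; hence $(C^*(E')\otimes C(S^1))^{U(1)}\cong C^*(E')$ and $(C^*(E'')\otimes C(S^1))^{U(1)}\cong C^*(E'')$, so the diagram of the Theorem specialises to the pullback square
\begin{equation*}
\xymatrix{
C^*(E)^{U(1)} \ar[r]\ar[d] & C^*(E') \ar[d] \\
C^*(E'')^{U(1)} \ar[r] & C^*(E'').
}
\end{equation*}

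I would then feed this square into the Mayer--Vietoris six-term sequence for a pullback of C*-algebras, which applies because one of the two maps into the corner $C^*(E'')$ is surjective --- namely $C^*(E')\to C^*(E'')$, the restriction to fixed-point subalgebras of the surjection $C^*(E')\otimes C(S^1)\to C^*(E'')\otimes C(S^1)$ supplied by the Theorem (equivalently, the natural quotient map of graph C*-algebras), surjectivity being inherited because $U(1)$ is compact and one may average over its Haar measure. This produces the exact hexagon with terms, in cyclic order, $K_0(C^*(E)^{U(1)})$, $K_0(C^*(E'')^{U(1)})\oplus K_0(C^*(E'))$, $K_0(C^*(E''))$, $K_1(C^*(E)^{U(1)})$, $K_1(C^*(E'')^{U(1)})\oplus K_1(C^*(E'))$, $K_1(C^*(E''))$, the two connecting maps being $K_0(C^*(E''))\to K_1(C^*(E)^{U(1)})$ and $K_1(C^*(E''))\to K_0(C^*(E)^{U(1)})$. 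Finally, recalling the classical fact that the fixed-point subalgebra of a graph C*-algebra under its gauge action is an AF-algebra, hence has vanishing $K_1$, and applying it to $E$ and to $E''$, one gets $K_1(C^*(E)^{U(1)})=0=K_1(C^*(E'')^{U(1)})$; the term $K_1(C^*(E)^{U(1)})$ thus collapses to $0$, the term $K_1(C^*(E'')^{U(1)})\oplus K_1(C^*(E'))$ collapses to $K_1(C^*(E'))$, and the hexagon becomes precisely the asserted six-term sequence, the surviving connecting map $K_1(C^*(E''))\to K_0(C^*(E)^{U(1)})$ being the one denoted $\partial_{10}$.

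The step I expect to take the most care is the first one --- verifying that $A\mapsto A^{U(1)}$ carries the equivariant pullback of the Theorem to a genuine pullback of C*-algebras, and correctly identifying the fixed-point subalgebras of the two tensor products. Once that is in place, what remains is a routine invocation of the standard Mayer--Vietoris machinery for C*-algebraic pullbacks together with the well-known AF property of gauge-invariant subalgebras of graph C*-algebras, plus the observation --- inherited from the Theorem by Haar-averaging, as noted --- that one leg of the fixed-point square is surjective.
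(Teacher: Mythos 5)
Your proposal is correct and follows essentially the same route as the paper: pass to fixed-point subalgebras of the equivariant pullback (using compactness of $U(1)$ and the identification $(C^*(E')\otimes C(S^1))^{U(1)}\cong C^*(E')$, etc.) to get the pullback square of Corollary~\ref{main_fixed}, apply the Mayer--Vietoris sequence, and kill the odd K-groups via the AF property of gauge-invariant subalgebras. The only difference is that you spell out the details (fixed points commute with fibre products; surjectivity of one leg via Haar averaging) that the paper leaves implicit.
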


The K-theory of graph C*-algebras is very well developed by now \cite{Cu81,dt02,RTW04,R05,CET12}, 
so it is hard to make a substantial contribution in this area.
However, our approach to the matter does bring its own  benefits:
\vspace*{-2mm}
\begin{itemize}
\item
The K-theory of fixed-point subalgebras of higher-rank graph C*-algebras \cite{hnpsz18} 
can be quite difficult to study. 
It was shown in \cite{FHMZ17} how equivariant pullback structures of graph C*-algebras provided
by the above theorem can be used
to reduce the K-theory  computations for higher-rank graphs to far more doable K-theory  
calculations for usual graphs.\\
\vspace*{-3mm}\item
Although the K-groups of the gauge-invariant subalgebra of a graph C*-algebra are, in principle, computable
for any graph~\cite{Dr00}, the above corollary makes this computation more effective by shifting it to simpler
graphs.\\
\vspace*{-3mm}\item
When the gauge-invariant subalgebra of a graph C*-algebra is only Morita equivalent to a graph C*-algebra,
it is tricky to describe it explicitly, so having it as a pullback C*-algebra (see Corollary~\ref{main_fixed})
remedies the problem whenever the gauge-invariant subalgebra of the trimmed graph C*-algebra
is known.\\
\vspace*{-3mm}\item
The Milnor connecting homomorphism $\partial_{10}$ in the above corollary unravels
generators of the even K-group the gauge-invariant subalgebra of a graph C*-algebra, so it might 
useful even when the gauge-invariant algebra is again a graph C*-algebra.
\end{itemize}

A one-loop extension of any finite graph $E''$ 
automatically becomes a trimmable graph~$E$, and $E'$ is simply a one-sink extension 
of~$E''$~\cite{RTW04}. 
Thus we can ``grow'' trimmable  graphs from any graph,  and use
the above corollary to determine the even K-groups. In one case we take as $E''$ the standard graph encoding 
the Cuntz algebra~$\mathcal{O}_2$, and in another case we take as $E''$ the standard graph encoding 
the Toeplitz algebra~$\mathcal{T}$. Then, in both cases, the gauge-invariant subalgebra $C^*(E)^{U(1)}$ 
has infinite-rank $K_0$-group.

On the other hand, we use our approach to study trimmable graphs naturally occurring in noncommutative topology,
such as graphs giving  the C*-algebras of  the 
Vaksman--Soibelman quantum sphere $S^{2n+1}_q$ and  the quantum 
lens space~$L_q^3(l; 1,l)$, respectively. The gauge-invariant subalgebra of the former C*-algebra defines
the Vaksman--Soibelman quantum complex projective space
 $\mathbb{C}{\rm P}^n_q$, and the gauge-invariant subalgebra of the latter C*-algebra defines
the quantum teardrop  $\mathbb{W}{\rm P}^1_q(1,l)$.
Although much is already known (generators included) about both the K-theory of
$C(\mathbb{C}{\rm P}^n_q)$  and $C(\mathbb{W}{\rm P}^1_q(1,l))$, revealing pullback structures
of these C*-algebras allows us to view projections whose classes generate the even K-groups
as Milnor idempotents, thus rendering the noncommutative vector bundles they define ``locally trivial''
and given by a clutching of trivial noncommutative vector bundles.

\subsection{Introduction}

Pushout diagrams in topology provide a systematic way of ``gluing'' of topological spaces,
in particular in the category of compact Hausdorff spaces. The latter is dualized by the 
Gelfand transform to the category of commutative unital C*-algebras with pushout diagrams
of spaces translated into pullback diagrams of algebras.

Our motivating example came from the following $U(1)$-equivariant pushout diagram concerning
spheres and the $2n$-ball:
\begin{equation}\label{d1}
\begin{gathered}
\xymatrix{
&S^{2n+1}&\\
S^{2n-1}\ar[ur]_{}&& B^{2n}\times S^1\ar[ul]^{}\\
&S^{2n-1}\times S^1~.\ar[lu]\ar[ru]&
}
\end{gathered}
\end{equation}
The  equivariance of the above diagram allows it to descend to the quotient spaces:
\begin{equation}
\begin{gathered}
\xymatrix{
&\mathbb{C}{\rm P}^n&\\
\mathbb{C}{\rm P}^{n-1}\ar[ur]_{}&& B^{2n}\phantom{X}\ar[ul]^{}\\
&S^{2n-1}~.\ar[lu]\ar[ru]&
}
\end{gathered}
\end{equation}
Thus we obtain a diagram that manifests the CW-complex structure of complex projective 
spaces~$\mathbb{C}{\rm P}^n$.

For $n=1$, a $q$-deformed version of the diagram~\eqref{d1} was considered in \cite{HW14}
and proved to be a $U(1)$-equivariant pullback diagram of C*-algebras. This pullback structure
provided therein an alternative way for computing an index pairing for noncommutative 
line bundles over the standard Podle\'s quantum sphere~\cite{Pod87,h-pm00}.
For $n=2$, an analogous result was called for in \cite{FHMZ17} in the context of the multipullback
noncommutative deformation of complex projective spaces. This led to obtaining the general result
for an arbitrary $n$ in this paper: a $U(1)$-equivariant pullback structure of the C*-algebra of the
Vaksman--Soibelman quantum sphere $S^{2n+1}_q$ is a prototype of the main theorem herein.
%\todo[inline]{of the main theorem herein --> of our main result.}

Our main theorem is based on a general concept of a trimmable graph~\cite{HKT18}:
a finite graph $E$ is $\bar{v}$-trimmable iff it consists
of a subgraph $E''$ emitting at least one edge to an external vertex $\bar{v}$ whose only outgoing
 edge $\bar{e}$ is a loop and such that
all edges other than  $\bar{e}$ that end in $\bar{v}$ begin in a vertex emitting an edge that ends
 not in~$\bar{v}$. A trimmable graph $E$ can be trimmed to its subgraph~$E''$. There is an intermediate
subgraph $E'$ of  $E$ that is a one-sink extension of~$E''$. 
Much as one
 defines a one-sink extension,
we can define a one-loop extension, so that a one-loop extension of any finite graph is automatically
a trimmable graph.

The paper is organized as follows.
To make it self-contained, we start by recalling some results on graph C*-algebras in the preliminaries. Then
we proceed to Section~2 where we prove the main result and 
study general K-theoretical benefits
of decomposing a trimmable graph C*-algebra into simpler building blocks.
The last section is devoted to four  examples of two different types.
 The first two of them are in the spirit of abstract
graph algebras (one-loop extensions of the Cuntz algebra $\mathcal{O}_2$ and the Toeplitz
 algerbra~$\mathcal{T}$), whereas the remaining two are in the spirit of noncommutative topology
($q$-deformations of spheres, balls, complex projective spaces, lens spaces, teardrops).

\subsection{Preliminaries}\label{sec:preliminaries}
\noindent

In this section, we recall some general results from the theory of graph C*-algebras. Our main references are
 \cite{aHR97,BPRS,R05}. 
We adopt the conventions of \cite{BPRS}, i.e., the roles of source and range map are exchanged with respect to~\cite{R05}.

Let $E$ be a directed graph, $E_0$ the set of vertices, $E_1$ the set of edges, \mbox{$s:E_1\to E_0$} and \mbox{$r:E_1\to E_0$} 
the source and 
range map respectively. 
A directed graph is called 
\emph{row-finite} if $s^{-1}(v)$ is a finite set for every $v\in E_0$.
It is called \emph{finite} if both sets $E_0$ and $E_1$ are finite.
A \emph{sink} is a vertex $v$ with no outgoing edges, that is $s^{-1}(v)=\{e\in E_1:s(e)=v\}=\emptyset$. 
By a \emph{path} $e$ of length $|e|=k\geq 1$ we mean a directed path, i.e.\ a sequence of edges $e:=e_1\ldots e_k$, 
with $r(e_i)=s(e_{i+1})$ for all $i=1,\ldots,k-1$. 
We view vertices as paths of length zero. We denote the set of all paths in $E$ by 
${\rm Path}(E)$. We extend the maps $r$ and $s$ to ${\rm Path}(E)$ by setting $s(e):=s(e_1)$ and $r(e):=r(e_k)$ for all $e$
of length $k\geq 1$, and $s(v):=v=:r(v)$ for all paths $v$ of length zero.
\begin{df}[Graph C*-algebra]
The \emph{graph \mbox{C*-algebra}} $C^*(E)$ of a row-finite graph $E$ is the universal \mbox{C*-algebra} generated by mutually 
orthogonal projections $\big\{P_v:v\in E_0\big\}$ and partial isometries $\big\{S_e:e\in E_1\big\}$ satisfying the \emph{Cuntz--Krieger relations}:
\begin{align}
S_e^*S_e &=P_{r(e)} && \text{for all }e\in E_1\text{, and}\tag{\text{CK1}} \label{eq:CK1} \\
\sum_{e\in E_1:\,s(e)=v}\!\! S_eS_e^*&=P_v && \text{for all }v\in E_0\text{ that are not sinks.}\tag{CK2} \label{eq:CK2}
\end{align}
The datum $\{S,P\}$ is called a \emph{Cuntz--Krieger $E$-family}.
\end{df}
\noindent
On the notational side, if $E'$ is a subgraph of $E$ and $\{S,P\}$ is the Cuntz--Krieger $E$-family,  
we will slightly abuse notation and denote the Cuntz--Krieger $E'$-family by $\{S,P\}$ as well. 

Any graph C*- algebra $C^*(E)$ can be endowed with a natural circle action 
$$\alpha:U(1)\longrightarrow\mathrm{Aut}(C^*(E)),$$ called the \emph{gauge action}. 
Using the universality of $C^*(E)$, it is defined
by being set on the generators:
$$
\alpha_\lambda(P_v)=P_v\;,\qquad
\alpha_\lambda(S_e)=\lambda S_e\;,\quad\text{where}\quad \lambda\in U(1),\quad v\in E_0,\quad e\in E_1.
$$
The fixed-point subalgebra under the gauge action is an AF-subalgebra of the form (e.g., see~\cite[Corollary~3.3]{R05}):
\begin{equation}\label{fixsub}
C^*(E)^{U(1)}=\overline{{\rm span}}\left\{S_x S^*_y:x,y\in {\rm Path}(E)\,,\;r(x)=r(y)\,,\;|x|=|y|\right\}.
\end{equation}
Here for a path $x:=x_1x_2\ldots x_n$ we set $S_x:=S_{x_1}S_{x_2}\ldots S_{x_n}$, and for a path $v$ of length $0$ we put 
$S_{v}:=P_{v}$.

The gauge action is a central ingredient in the gauge-invariant uniqueness theorem proved by an Huef and Raeburn 
\cite[Theorem~2.3]{aHR97} in the context of Cuntz--Krieger algebras~\cite{CK80}, 
and then generalized to graph C*-algebras of row-finite graphs
by Bates, Pask, Raeburn and Szyma\'nski \cite[Theorem~2.1]{BPRS}. This theorem, together with the universality of 
graph C*-algebras with respect to the Cuntz--Krieger relations, is an essential tool in proving that a given C*-algebra 
is isomorphic to a graph C*-algebra. We give here a slight reformulation of the result, more suitable for the purposes of this work.

\begin{thm}[Gauge-invariant uniqueness theorem \protect{\cite{aHR97}, \cite{BPRS}, \cite[Theorem~2.2]{R05}}]\label{gut}
Let $E$ be a row-finite graph with the Cuntz--Krieger family $\{S,P\}$, 
let $A$ be a C*-algebra with a continuous action of $U(1)$ and $\rho:C^*(E)\to A$ a $U(1)$-equivariant 
$*$-homomorphism. If $\rho(P_v)\neq 0$ for all $v\in E_0$, then $\rho$ is injective.
\end{thm}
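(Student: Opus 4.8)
The quickest route recognises the statement as a repackaging of the gauge-invariant uniqueness theorem already cited. Given $\rho$ as in the hypotheses, put $T_e:=\rho(S_e)$ for $e\in E_1$ and $Q_v:=\rho(P_v)$ for $v\in E_0$. Being the image of a Cuntz--Krieger $E$-family under a $*$-homomorphism, $\{T,Q\}$ is again a Cuntz--Krieger $E$-family in $A$, with $Q_v\neq 0$ for all $v$ by assumption; moreover $\beta_\lambda(T_e)=\rho(\alpha_\lambda(S_e))=\lambda T_e$ and $\beta_\lambda(Q_v)=\rho(\alpha_\lambda(P_v))=Q_v$ by $U(1)$-equivariance of $\rho$, so the given continuous action $\beta$ of $U(1)$ on $A$ is precisely the action required by \cite[Theorem~2.2]{R05} (after \cite{aHR97,BPRS}). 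Since $\rho$ sends the canonical generators of $C^*(E)$ to $\{T,Q\}$, it is the canonical homomorphism attached to this Cuntz--Krieger family, and that theorem yields its injectivity.

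For a self-contained argument I would rerun the proof behind the cited result. First I would build two averaging conditional expectations: the gauge action $\alpha$ is strongly continuous, so $\Phi_E(a):=\int_{U(1)}\alpha_\lambda(a)\,d\lambda$ (normalised Haar measure) defines a conditional expectation $\Phi_E\colon C^*(E)\to C^*(E)^{U(1)}$, and continuity of $\beta$ gives in the same way $\Phi_A\colon A\to A^{U(1)}$. Both are faithful: if $\Phi_A(a^*a)=0$ then $\int_{U(1)}\beta_\lambda(a^*a)\,d\lambda=0$ with integrand a norm-continuous path of positive elements, forcing $\beta_\lambda(a^*a)=0$ for every $\lambda$, in particular $a^*a=0$ (and similarly for $\Phi_E$). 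Equivariance of $\rho$ gives the intertwiner $\rho\circ\Phi_E=\Phi_A\circ\rho$. Granting that $\rho$ is injective on $C^*(E)^{U(1)}$, injectivity on all of $C^*(E)$ follows at once: if $\rho(a)=0$ then $\rho(\Phi_E(a^*a))=\Phi_A(\rho(a)^*\rho(a))=0$, hence $\Phi_E(a^*a)=0$ by the assumed injectivity, hence $a^*a=0$ by faithfulness of $\Phi_E$, i.e.\ $a=0$.

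It then remains to prove injectivity of $\rho$ on the fixed-point subalgebra. By \eqref{fixsub} and its proof (see \cite[Corollary~3.3]{R05}), $C^*(E)^{U(1)}$ is an AF algebra: the closure of an increasing union $\bigcup_n\mathcal{F}_n$ of sub-C*-algebras, where $\mathcal{F}_n$ is a $c_0$-direct sum $\bigoplus_w\mathcal{K}\big(\ell^2(\Lambda_{n,w})\big)$ over vertices $w\in E_0$, with $\Lambda_{n,w}$ the set of paths of length $n$ ending at $w$ (together with the shorter paths ending at $w$ when $w$ is a sink), the partial isometries $S_xS_y^*$ realising the matrix units of the $w$-summand; the inclusion $\mathcal{F}_n\subseteq\mathcal{F}_{n+1}$ is forced by \eqref{eq:CK2} and by the relation $S_x^*S_y=\delta_{x,y}P_{r(x)}$ for paths of equal length. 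Now for each $n$ and each $w$ with $\Lambda_{n,w}\neq\emptyset$, choosing $\mu\in\Lambda_{n,w}$ one has $\rho(S_\mu)^*\rho(S_\mu)=\rho(S_\mu^*S_\mu)=\rho(P_w)\neq 0$, so $\rho(S_\mu)\neq 0$ and hence $\rho(S_\mu S_\mu^*)=\rho(S_\mu)\rho(S_\mu)^*\neq 0$. Thus $\ker(\rho|_{\mathcal{F}_n})$ is a closed ideal of $\mathcal{F}_n$ containing none of its summands; since $\mathcal{F}_n$ is a $c_0$-direct sum of simple C*-algebras, such an ideal must be zero. As an injective $*$-homomorphism is isometric, $\rho$ is then isometric on every $\mathcal{F}_n$, hence on $\bigcup_n\mathcal{F}_n$, hence on its closure $C^*(E)^{U(1)}$, which finishes the proof.

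The only genuinely nontrivial ingredient, and the point to be careful about in the self-contained route, is the AF structure of the core used in the last step --- the description of the spectral-subspace algebras $\mathcal{F}_n$ as $c_0$-sums of compacts with the $S_xS_y^*$ as matrix units, together with the nesting $\mathcal{F}_n\subseteq\mathcal{F}_{n+1}$ --- which is classical and imported from \cite{R05}. Everything else is a formal consequence of $U(1)$-equivariance together with faithfulness of the two averaging projections, and it is precisely this faithfulness that uses the continuity of the action on $A$.
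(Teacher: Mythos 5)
The paper does not actually prove this statement---it is imported (up to reformulation) from the cited references---so your first paragraph, which checks that the image family $\{T,Q\}$ is again a Cuntz--Krieger $E$-family with nonzero vertex projections intertwined by the given circle action and then invokes \cite[Theorem~2.2]{R05} (after \cite{aHR97,BPRS}), is exactly the intended justification and matches the paper's approach. Your supplementary self-contained argument is also correct and is the standard proof from those references: reduction to the AF core via the faithful averaging expectation $\Phi_E$ and the intertwining relation $\rho\circ\Phi_E=\Phi_A\circ\rho$, followed by the observation that $\ker(\rho|_{\mathcal{F}_n})$ meets none of the simple summands of $\mathcal{F}_n$ because $\rho(S_\mu S_\mu^*)\neq 0$ whenever $\rho(P_{r(\mu)})\neq 0$.
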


To understand gauge-invariant ideals of graph C*-algebras, 
we need to introduce two kinds of subsets of the set of vertices.
Recall that given two vertices $v,w\in E_0$, whenever $w$ is 
\emph{reachable} from $v$, that is whenever there is a path from $v$ to $w$, we write 
$v>w$, and we write $v\geq w$ if $v>w$ or $v=w$. A subset $H$ of $E_0$
is called \emph{hereditary} iff $v \geq w$ and $v \in H$ imply $w \in H$.
A hereditary subset $H$ is \emph{saturated} iff every
vertex which feeds into $H$ and only into $H$ is again in~$H$.
We denote by $\overline{H}$ the \emph{saturation} of a hereditary subset~$H$,
that is the smallest saturated subset containing $H$.

It follows from \cite[Lemma~4.3]{BPRS} that, for any hereditary subset $H$,
the (algebraic) ideal 
generated by $\{P_v:v\in H\}$ is of the form
  \begin{equation}\label{herideal}
I_E(H)=\mathrm{span}\left\{S_xS_y^*:x,y\in\text{Path}(E)\,,\;r(x)=r(y)\in \overline{H}\right\}.
  \end{equation}
Equation (\ref{herideal}) will play an essential role in the proof of Theorem \ref{main}.

By \cite[Theorem 4.1 (a)]{BPRS}, given a row-finite graph $E$, the 
gauge-invariant ideals in the graph algebra $C^*(E)$ are in 
one-to-one correspondence with saturated hereditary subsets of $E_0$.
By \cite[Theorem~4.1~(b)]{BPRS}, quotients by (closed) ideals generated by saturated hereditary subsets can 
be realised also as graph C*-algebras 
by constructing a \emph{quotient graph}. Given a saturated hereditary subset $H$ of $E_0$, the quotient graph $E/H$ is the 
graph obtained
by removing from $E$ all the vertices in $H$ and all the edges whose range is in $H$, i.e. $(E/H)_0:=E_0\setminus H $ and 
$ (E/H)_1:=\{e\in E_1:r(e)\notin H\}$. 
As a consequence, we have a $U(1)$-equivariant 
isomorphism
\begin{equation}\label{quotient}
C^*(E)\slash \overline{I_{E}(H)}\cong C^*(E/H),
\end{equation}
where $ \overline{I_{E}(H)}$ is the norm closure of $I_E(H)$.
\section{Trimmable graph C*-algebras}

The following notion of a trimmable graph was introduced in~\cite[Definition~2.1]{HKT18}
in the context of Leavitt path algebras. 

\begin{df}[\cite{HKT18}]\label{trim} Let $E$ be a finite graph with a distinguished vertex $\bar{v}$
emitting a loop~$\bar{e}$.
A graph $E$ is called $\bar{v}$-\emph{trimmable} iff the pair $(E,\bar{v})$ satisfies the following conditions
\begin{gather}
s^{-1}(\bar{v})=\{\bar{e}\}, \quad r^{-1}(\bar{v})\setminus\{\bar{e}\}\neq\emptyset,\tag{T1}\label{eq:trim0}
\\
\forall\; v\in s\big(r^{-1}(\bar{v})\setminus\{\bar{e}\}\big)\,\colon\quad s^{-1}(v)\setminus r^{-1}(\bar{v})\neq\emptyset.\tag{T2}\label{eq:trim}
\end{gather}
We call $C^*(E)$ a $\bar{v}$-\emph{trimmable graph C*-algebra} iff $E$ is $\bar{v}$-trimmable.
\end{df}
\noindent
Note that conditions (\ref{eq:trim0}) and (\ref{eq:trim}) imply that $\{\bar{v}\}$ is a saturated hereditary subset of $E_0$.
Furthermore, if (\ref{eq:trim}) is not satisfied, i.e.\ for some $v\in s(r^{-1}(\bar{v})\setminus\{\bar{e}\})$ the set difference
$s^{-1}(v)\setminus r^{-1}(\bar{v})$ is empty, then the quotient map $C^*(E)\to C^*(E/\{\bar{v}\})$ would not
be well defined as it would map all elements $S_y$, where $y\in r^{-1}(\bar{v})$, to zero,
thus violating the Cuntz-Krieger relations for $C^*(E/\{\bar{v}\})$.

There is an ample supply of trimmable graphs because, given any finite graph $E''$, 
we can create a trimmable graph $E$ by 
taking a~\emph{one-loop extension}  of $E''$. We define one-loop extensions in the spirit of 
\emph{one-sink extensions} 
defined in \cite[Definition~1.1]{RTW04}.
\begin{df}
Let $E''$ be a finite graph. A finite graph $E$ is called a~\emph{one-loop extension} of $E''$ iff
the following conditions are satisfied:
\vspace{-2mm}
\begin{enumerate}
\item $E''$ is a subgraph of $E$,
\item $E_0\setminus E''_0=\{\bar{v}\}$ (there is only one vertex outside of $E''$),
\item $s^{-1}(\bar{v})=\{\bar{e}\}$ and $r(\bar{e})=\bar{v}$ (the only edge outgoing from $\bar{v}$ is a loop),
\item $r^{-1}(\bar{v})\setminus\{\bar{e}\}\neq\emptyset$ (there is at least edge connecting $E''$ with $\bar{v}$),
\item if $v$ is a sink in $E''$, then it remains a sink in $E$ (equivalent to the condition \eqref{eq:trim}).
\end{enumerate}
\end{df}
\noindent
Note that, for any trimmable graph $E$, there is an intermediate graph $E'$ that is a subgraph of $E$
and a one-sink extension~of~$E''$.

\subsection{A $K_1$-generator for trimmable graphs without sinks}
Given a $\bar{v}$-trimmable graph $E$, the Cuntz--Krieger relations imply that the partial isometry associated to the loop $\bar{e}$
based at $\bar{v}$ is a normal operator. This fact can be used to construct a~distinguished class in $K_1(C^*(E))$.

\begin{prop}
\label{prop:K_1}
 Let $E$ be a $\bar v$-trimmable graph without sinks. Then $K_1(C^*(E))$ contains a copy of $\Z$ generated by the class of the unitary 
 \[ U = S_{\bar{e}} + (1-S_{\bar{e}}S_{\bar{e}}^*). \]
\end{prop}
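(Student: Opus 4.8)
The plan is to exhibit $U$ as a genuine unitary in $C^*(E)$ and then produce a $U(1)$-equivariant $*$-homomorphism out of $C^*(E)$ which sends $U$ to a generator of $K_1(C(S^1))\cong\Z$; composing with this map on $K_1$ then shows that $[U]$ generates a $\Z$ inside $K_1(C^*(E))$.

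First I would check that $U=S_{\bar e}+(1-S_{\bar e}S_{\bar e}^*)$ is unitary. Since $E$ has no sinks, $\bar v$ is not a sink, and by \eqref{eq:CK1} applied to the loop $\bar e$ we have $S_{\bar e}^*S_{\bar e}=P_{r(\bar e)}=P_{\bar v}$, while \eqref{eq:trim0} together with \eqref{eq:CK2} at $\bar v$ gives $S_{\bar e}S_{\bar e}^*=P_{\bar v}$ as well. Hence $S_{\bar e}$ is a partial isometry with the same source and range projection $P_{\bar v}$, i.e.\ a normal partial isometry, and $1-S_{\bar e}S_{\bar e}^*=1-P_{\bar v}$ is the complementary projection. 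A direct computation then gives $U^*U=UU^*=1$, so $U\in C^*(E)$ is unitary and defines a class $[U]\in K_1(C^*(E))$.

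Next I would build the equivariant map detecting this class. Consider the saturated hereditary set $H=\{\bar v\}$ (noted to be saturated hereditary right after Definition~\ref{trim}); by \eqref{quotient} there is a $U(1)$-equivariant isomorphism $C^*(E)/\overline{I_E(\{\bar v\})}\cong C^*(E/\{\bar v\})$. Rather than quotienting, though, it is cleaner to map \emph{onto} the corner: I would instead use the ideal picture the other way. Let $\pi\colon C^*(E)\to C^*(E)/\overline{I_E(E_0\setminus\{\bar v\})}$ — wait, $E_0\setminus\{\bar v\}$ need not be hereditary — so instead I proceed as follows. The loop $\bar e$ with $S_{\bar e}^*S_{\bar e}=S_{\bar e}S_{\bar e}^*=P_{\bar v}$ generates a copy of $C(S^1)$ (unitally, inside $P_{\bar v}C^*(E)P_{\bar v}$) since $S_{\bar e}$ is a normal partial isometry whose source/range projection is $P_{\bar v}$; universality of $C^*(E)$ lets one define a $U(1)$-equivariant $*$-homomorphism $\rho\colon C^*(E)\to C(S^1)$ by sending $P_{\bar v}\mapsto 1$, $S_{\bar e}\mapsto z$ (the standard unitary generator), and all other $P_v\mapsto 0$, $S_e\mapsto 0$; one must check the Cuntz--Krieger relations survive. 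The obstruction to $P_v\mapsto 0$ for $v\neq\bar v$ is exactly relation \eqref{eq:CK2} at vertices feeding $\bar v$, and this is where trimmability enters: condition \eqref{eq:trim} guarantees every vertex $v\in s(r^{-1}(\bar v)\setminus\{\bar e\})$ emits an edge \emph{not} ending in $\bar v$, so the corresponding \eqref{eq:CK2} relation does not force $P_v\neq 0$ when the $\bar v$-bound edges are killed — precisely the remark following Definition~\ref{trim}. Under $\rho$ we have $\rho(U)=z+(1-1)=z$, the standard generator of $K_1(C(S^1))\cong\Z$. Therefore $\rho_*\colon K_1(C^*(E))\to K_1(C(S^1))\cong\Z$ sends $[U]\mapsto[z]$, a generator, so $[U]$ has infinite order and the subgroup it generates is a copy of $\Z$.

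The main obstacle is verifying that the putative map $\rho$ is well defined, i.e.\ that setting $P_v=0$ for $v\neq\bar v$ and $S_e=0$ for $e\neq\bar e$ is consistent with \eqref{eq:CK1}–\eqref{eq:CK2}; this is a relation-chasing argument that relies essentially on \eqref{eq:trim} (a vertex feeding only into $\bar v$ would make \eqref{eq:CK2} collapse to $0=P_v$ with $P_v$ forced nonzero, breaking the map), which is exactly why the hypothesis of $\bar v$-trimmability, rather than merely ``$E$ has a loop at $\bar v$'', is needed. Once $\rho$ is in hand, unitarity of $U$ and the computation $\rho(U)=z$ are routine.
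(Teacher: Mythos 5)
Your verification that $U$ is unitary is correct and agrees with the paper's. The fatal problem is the detection step: the $*$-homomorphism $\rho\colon C^*(E)\to C(S^1)$ with $\rho(P_{\bar v})=1$, $\rho(S_{\bar e})=z$ and $\rho(P_v)=0$, $\rho(S_e)=0$ for all other generators does not exist, and its nonexistence is forced precisely by the trimmability condition \eqref{eq:trim0}. That condition guarantees an edge $e_0\in r^{-1}(\bar v)\setminus\{\bar e\}$, and for this edge relation \eqref{eq:CK1} reads $S_{e_0}^*S_{e_0}=P_{\bar v}$; applying your putative $\rho$ yields $0=\rho(S_{e_0})^*\rho(S_{e_0})=\rho(P_{\bar v})=1$, a contradiction. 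You checked only the \eqref{eq:CK2}-type obstruction at vertices feeding into $\bar v$, but the obstruction already occurs at \eqref{eq:CK1}: in any quotient of a graph C*-algebra, killing $S_{e_0}$ kills $P_{r(e_0)}=P_{\bar v}$, and since $\bar v$ is reachable from other vertices there is no quotient keeping $P_{\bar v}$ alive while annihilating everything else (equivalently, $E_0\setminus\{\bar v\}$ is not hereditary, as you yourself observed before abandoning that route). With $\rho$ gone, nothing in your argument shows $[U]\neq 0$, so the proof collapses at its central point.

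For comparison, the paper detects $[U]$ without any such character: since $E$ has no sinks, $C^*(E)$ is a Cuntz--Krieger algebra and $K_1(C^*(E))\cong\ker(1-A_E^t)$ by Cuntz's computation; condition \eqref{eq:trim0} forces the $\bar v$-row of the incidence matrix to be $(1,0,\dots,0)$, so the first column of $1-A_E^t$ vanishes, the vector $(1,0,\dots,0)$ generates a copy of $\Z$ in the kernel, and R{\o}rdam's argument identifies that vector with the class of $U=S_{\bar e}+(1-S_{\bar e}S_{\bar e}^*)$. If you insist on a map-based argument, the object carrying the circle is the \emph{ideal} $\overline{I_E(\{\bar v\})}$ (Morita equivalent to $C(S^1)$), not a quotient of $C^*(E)$, and you would then need to control the index map in the associated six-term sequence --- which amounts to the same matrix computation.
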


\begin{proof}
Let $v_i, i=1, \dots, n-1$, be the vertices of $E$ different from $\bar{v}$ (recall that $E$ is finite). 
By the trimmability conditions \eqref{eq:trim0} and \eqref{eq:trim} in Definition \ref{trim}, the incidence 
matrix for the graph 
$E$ is an $n \times n$ matrix of the form
\[
A_E= \left( \begin{array}{c|ccccc}
    1 & 0 & 0 & \dots & 0 & 0  \\ \hline
    A(1,0) & & & & & \\
    \vdots & & & A_{E''} & &\\
    A(n-1,0) & & & &  &
   \end{array} \right).
\]
Here $\bar{v}$ vertex is listed first, $A_{E''}$ is the incidence matrix of the quotient graph $E'':=E/\{\bar{v}\}$
and $A(i,0) := \# \lbrace e \in E^{1} \ : \ r(e)=\bar{v},~s(e)=v_i \rbrace.$ The actual values of the $A(i,0)$ do not matter for our proof. 
The above implies that the first column of the matrix $1-A^t_E$ contains only zeros. This means that the vector $(1,0,\dots,0)$ generates a copy of $\Z$ inside $\ker (1-A^t_E)$. Since the graph $E$ has no sinks, the C*-algebra $C^*(E)$ is isomorphic to the 
Cuntz--Krieger algebra of the edge matrix $A_E$ of the graph, so $\ker (1-A^t_E)\cong K_1(C^*(E))$ by \cite[Proposition~3.1]{Cu81}.

Next, let us consider the partial isometry $S_{\bar{e}}$ associated to the loop $\bar{e}$ based at the 
distiguished vertex~$\bar{v}$. 
Since $\bar{v}$ does not emit any 
other edge besides the loop $\bar{e}$, we deduce from the Cuntz--Krieger relations that $S_{\bar{e}}$ is a normal element. 
Then one readily checks that the element $U := S_{\bar{e}} + (1-S_{\bar{e}}S_{\bar{e}}^*)$ is unitary.
The argument outlined in \cite[Section~2]{Roe95} allows us to conclude that $U$ generates a copy of $\Z$ inside $K_1(C^*(E))$ 
corresponding to the vector $(1,0,\dots,0)$.
\end{proof}

\subsection{An equivariant  pullback structure}
\label{sec:equiv}
In this section, we prove that every $\bar{v}$-trimmable graph 
C*-algebra $C^*(E)$ 
is $U(1)$-equivariantly isomorphic 
to the pullback
C*-algebra of $C^*(E')\otimes C(S^1)$ and $C^*(E'')$  over $C^*(E'')\otimes C(S^1)$, 
where $E'$ is the subgraph of $E$ obtained by removing the loop~$\bar e$ and $E'':=E/\{\bar v\}$. 
For this statement, we need to introduce $U(1)$-equivariant *-homomorphisms which give the aforementioned 
pullback structure.

We begin with the map that dualizes the gauge action $\alpha$, namely the gauge coaction 
$$
\delta:C^*(E)\longrightarrow C^*(E)\otimes C(\bS^1),
$$
given on generators by
$$
\delta(S_e)=S_e\otimes u\;,
\qquad
\delta(P_v)=P_v\otimes 1\;,\quad
\text{for~all}\quad e\in E_1 \quad\text{and}\quad v\in E_0.
$$
Here we denote by $u$ the standard unitary generator of $C(\bS^1)$ and by $\{S,P\}$ the Cuntz--Krieger $E$-family.
The gauge coaction is $U(1)$-equivariant with respect to the gauge action on $C^*(E)$ and the action on
$C^*(E)\otimes C(S^1)$ given by the standard action on the rightmost tensorand.

The singleton set $\{\bar{v}\}$ is a saturated hereditary subset of both $E_0$ and $E'_0$ 
(note that $E/\{\bar{v}\}=E'/\{\bar{v}\}$) and,
by the one-to-one correspondence of gauge-invariant ideals and saturated hereditary subsets,
the quotient maps 
$$
\pi_1:C^*(E)\longrightarrow C^*(E''),\qquad
\pi_2:C^*(E')\longrightarrow C^*(E''),
$$
are $U(1)$-equivariant with respect to the gauge actions on $C^*(E)$, $C^*(E')$ and $C^*(E'')$.
In the forthcoming theorem, we will consider a *-homomorphism
\[
\pi_2\otimes\id :C^*(E')\otimes C(S^1)\longrightarrow C^*(E'')\otimes C(S^1)
\]
viewed as a $U(1)$-equivariant map with respect to the standard $U(1)$-action on~$C(S^1)$.

Finally, using the condition (\ref{eq:trim0}), one readily verifies that  
the assignment
\[ f(P_v) = P_v\otimes 1,\quad v\in E_0,
\qquad\quad
f(S_e) = 
\begin{cases}
  P_{\bar{v}}\otimes u & \text{ if } e = \bar{e},\\
  S_e\otimes u & \text{ if } e\in E'_1\,,
\end{cases}
\]
defines a map
\[
f\colon C^*(E)\longrightarrow C^*(E')\otimes C(S^1),
\]
as it preserves the relations 
\eqref{eq:CK1}-\eqref{eq:CK2} of the graph C*-algebra $C^*(E)$. 
It is equally straightforward to check that $f$ is equivariant with respect to the gauge action on $C^*(E)$
and the action on $C^*(E')\otimes C(S^1)$ given by the standard action on the rightmost tensorand.

\begin{thm}\label{main}
Let $E$ be a $\bar{v}$-trimmable graph, $E'$ the subgraph of $E$ obtained  
by removing the  unique outgoing loop~$\bar{e}$, and $E''$ the subgraph of $E'$
obtained  by removing the vertex $\bar{v}$ and all edges ending in~$\bar{v}$.
Then the following diagram of the above-defined $U(1)$-equivariant *-homomorphisms
{\begin{equation}
\begin{gathered}
\label{eq:pbdiag}
\xymatrix{
&C^*(E)\ar[ld]_{\pi_1}\ar[rd]^{f}&\\
C^*(E'')\ar[dr]_{\delta\quad}&& 
C^*(E')\otimes C(S^1)
\ar[dl]^{\quad\pi_2\otimes\id}\\
&C^*(E'')\otimes C(S^1)&
}
\end{gathered}
\end{equation}}
is a pullback diagram of $U(1)$-C*-algebras. 
\end{thm}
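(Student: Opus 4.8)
The plan is to build the canonical $*$-homomorphism from $C^*(E)$ into the pullback C*-algebra
$$
P:=\bigl\{(x,y)\in C^*(E'')\oplus\bigl(C^*(E')\otimes C(S^1)\bigr)\ :\ \delta(x)=(\pi_2\otimes\id)(y)\bigr\}
$$
of the two right-hand maps of \eqref{eq:pbdiag}, and to prove that it is a $U(1)$-equivariant isomorphism. Here $P$ carries the $U(1)$-action obtained by restricting the diagonal action on the direct sum (the gauge action on $C^*(E'')$ and the standard action on the rightmost tensorand of $C^*(E')\otimes C(S^1)$); this action preserves $P$ and is continuous because $\delta$ and $\pi_2\otimes\id$ are $U(1)$-equivariant. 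The first step is to check on the generators $P_v$, $S_e$ that $(\pi_2\otimes\id)\circ f=\delta\circ\pi_1$; the only cases that are not the trivial identity $P_v\otimes 1=P_v\otimes 1$ or $S_e\otimes u=S_e\otimes u$ are $v=\bar{v}$ and $e\in r^{-1}(\bar{v})$, and there both composites vanish because $\pi_1(P_{\bar{v}})=0=\pi_2(P_{\bar{v}})$. Hence the square \eqref{eq:pbdiag} commutes, so $\Phi:=(\pi_1,f)\colon C^*(E)\to P$ is a well-defined $*$-homomorphism, and it is $U(1)$-equivariant since $\pi_1$ and $f$ are.

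Injectivity of $\Phi$ is then immediate from the gauge-invariant uniqueness theorem (Theorem~\ref{gut}) applied to the $U(1)$-C*-algebra $P$: for every $v\in E_0$ the element $\Phi(P_v)=(\pi_1(P_v),P_v\otimes 1)$ has nonzero second component, because vertex projections of a (row-finite) graph C*-algebra are nonzero. For surjectivity I would use the standard criterion for recognizing pullbacks of C*-algebras: the square being commutative, $\Phi$ is an isomorphism as soon as $\pi_1$ and $\pi_2\otimes\id$ are surjective, $\ker\pi_1\cap\ker f=0$, and $f(\ker\pi_1)=\ker(\pi_2\otimes\id)$. Surjectivity of $\pi_1$ and $\pi_2$ holds because they are the quotient maps onto $C^*(E/\{\bar{v}\})=C^*(E'')$ from \eqref{quotient}, and tensoring a surjection with $\id_{C(S^1)}$ keeps it surjective; moreover $\ker\pi_1\cap\ker f=0$ is just a reformulation of the injectivity already established.

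So the \emph{heart of the argument} is the identity $f(\ker\pi_1)=\ker(\pi_2\otimes\id)$. The inclusion ``$\subseteq$'' follows from commutativity of the square. For ``$\supseteq$'' I would combine two descriptions. First, by \eqref{herideal} applied to the saturated hereditary set $\{\bar{v}\}$ we have $\ker\pi_1=\overline{I_E(\{\bar{v}\})}=\overline{\mathrm{span}}\{S_xS_y^*:x,y\in\mathrm{Path}(E),\ r(x)=r(y)=\bar{v}\}$, and similarly $\ker(\pi_2\otimes\id)=\overline{I_{E'}(\{\bar{v}\})}\otimes C(S^1)=\overline{\mathrm{span}}\{S_{x'}S_{y'}^*\otimes u^n:x',y'\in\mathrm{Path}(E'),\ r(x')=r(y')=\bar{v},\ n\in\Z\}$ (using that $-\otimes C(S^1)$ is exact, $C(S^1)$ being nuclear). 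Second, condition \eqref{eq:trim0} (precisely, $s^{-1}(\bar{v})=\{\bar{e}\}$) forces every path $x$ in $E$ with $r(x)=\bar{v}$ to factor as $x=x'\bar{e}^{m}$ with $m\geq 0$ and $x'$ a path in $E'$ with $r(x')=\bar{v}$; since $f(S_{\bar{e}})=P_{\bar{v}}\otimes u$ and $f(S_e)=S_e\otimes u$ for $e\in E'_1$, a short computation using $S_{x'}P_{\bar{v}}=S_{x'}$ gives $f(S_x)=S_{x'}\otimes u^{|x|}$ and therefore
$$
f(S_xS_y^*)=S_{x'}S_{y'}^*\otimes u^{\,|x|-|y|}.
$$
Given a spanning element $S_{x'}S_{y'}^*\otimes u^n$ of $\ker(\pi_2\otimes\id)$, I would choose $m,m'\geq 0$ with $(|x'|+m)-(|y'|+m')=n$ and observe that $f\bigl(S_{x'\bar{e}^{m}}\,S_{y'\bar{e}^{m'}}^{\,*}\bigr)=S_{x'}S_{y'}^*\otimes u^n$; thus $f(\ker\pi_1)$ contains a dense subset of $\ker(\pi_2\otimes\id)$, and being the image of a $*$-homomorphism of C*-algebras it is closed, so the two coincide. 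Putting everything together yields that $\Phi$ is a $U(1)$-equivariant $*$-isomorphism, which is the assertion. The main obstacle is exactly this last identity: it rests on the explicit form \eqref{herideal} of the gauge-invariant ideal generated by $P_{\bar{v}}$, on the fact that $f$ collapses the loop $\bar{e}$ while only recording a power of the generator $u$, and on the elementary observation that appending loops to $x$ and $y$ produces every power of $u$.
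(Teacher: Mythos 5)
Your proof is correct and follows essentially the same route as the paper's: injectivity via the gauge-invariant uniqueness theorem, surjectivity via Pedersen's pullback criterion reduced to the kernel condition $\ker(\pi_2\otimes\id)\subseteq f(\ker\pi_1)$, established by computing $f$ on the spanning elements $S_xS_y^*$ of the ideal $\overline{I_E(\{\bar v\})}$. The only (harmless) difference is cosmetic: you realize an arbitrary power $u^n$ by appending nonnegative powers of the loop $\bar e$ to both paths $x'$ and $y'$, whereas the paper inserts a single (possibly negatively signed) power of $S_{\bar e}$ between them; your variant avoids interpreting negative exponents of a partial isometry.
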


\begin{proof}
The above diagram is clearly commutative, 
so there is a *-homomorphism $F$ mapping $C^*(E)$  into the pullback C*-algebra.
Injectivity of $F$ follows from injectivity of $f$, 
which is a consequence of the gauge-invariant uniqueness theorem (Theorem~\ref{gut}).
Furthermore, since $\pi_1$ and $\pi_2\otimes \id$ are surjective, using a C*-algebraic incarnation \cite[Proposition~3.1]{P99}
of a well-known characterization of when a commutative diagram is a pullback diagram (e.g. see~\cite[Lemma~4.1]{HKT18}), 
to prove the surjectivity of $F$, it only remains to check whether
\begin{equation}\label{kernel}
{\rm ker}(\pi_2\otimes\id)\subseteq f({\rm ker}(\pi_1)).
\end{equation}

Again, we observe that $\{\bar{v}\}$ is a saturated hereditary subset of both $E_0$ and $E'_0$. 
Therefore, it generates gauge-invariant ideals $\overline{I_{E}(\bar{v})}$ 
and $\overline{I_{E'}(\bar{v})}$
in $C^*(E)$ and $C^*(E')$ respectively. 
It follows from \eqref{herideal} and \eqref{quotient} that
\begin{equation*}
\ker(\pi_1)=\overline{I_E(\bar{v})}\qquad\text{and}\qquad
\ker(\pi_2)=\overline{I_{E'}(\bar{v})},
\end{equation*}
so every element in $I_{E'}(\bar{v})\otimes \mathbb{C}[u,u^{-1}]$ is a linear combination of elements of the form
\[
\left(\sum_{i=1}^n k_iS_{x_i}S^*_{y_i}\right) \otimes u^m,
\quad k_i\in\mathbb{C},\quad x_i,y_i\in{\rm Path}(E'),\quad r(x_i)=r(y_i)=\bar{v},\quad m\in\mathbb{Z}.
\]
Given an element as above, we can show that it belongs to $f(I_E(\bar{v}))$ using the following equality
\[
\sum_{i=1}^n f(S_{x_i}S_{\bar{e}}^{m-(|x_i|-|y_i|)}S^*_{y_i})=\left(\sum_{i=1}^n k_iS_{x_i}S^*_{y_i}\right) \otimes u^m.
\]
Hence $I_{E'}(\bar{v})\otimes\mathbb{C}[u,u^{-1}]\subseteq f(I_E(\bar{v}))$.
Finally, to prove \eqref{kernel}, we compute
\[
\ker(\pi_2\otimes\id)=\ker(\pi_2)\otimes C(S^1)=\overline{I_{E'}(\bar{v})\otimes \mathbb{C}[u,u^{-1}]}\subseteq \overline{f(I_E(\bar{v}))}=f(\ker(\pi_1)).
\]
Here the last equality follows from the fact that the image of a C*-algebra under 
any *-homomorphism is closed.
\end{proof}
As a corollary, by equivariance of all maps in diagram (\ref{eq:pbdiag}) and compactness of $U(1)$, 
we obtain a pullback diagram at 
the level of fixed-point subalgebras:
\begin{cor}\label{main_fixed}
The diagram of *-homomorphisms 
\begin{equation}\label{eq:pbfixed}
\begin{gathered}
\xymatrix{
&C^*(E)^{U(1)}\ar[ld]_{\widetilde{\pi_1}}\ar[rd]^{\widetilde{f}}&\\
C^*(E\rq{}')^{U(1)}\ar[dr]_{\iota\quad}&& 
C^*(E')
\ar[dl]^{\quad\pi_2}\\
&C^*(E\rq{}\rq{})&
}
\end{gathered}
\end{equation}
is a pullback diagram of C*-algebras. Here $\widetilde{\pi_1}$ and $\widetilde{f}$ denote *-homomorphisms 
that are restrictions-corestrictions to the
fixed-point subalgebras of $\pi_1$ and $f$ respectively, and $\iota$ is the subalgebra inclusion.
\end{cor}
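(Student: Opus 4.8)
\textbf{Plan of proof for Corollary~\ref{main_fixed}.}
The strategy is to push the pullback diagram~\eqref{eq:pbdiag} of $U(1)$-C*-algebras through the fixed-point functor, using the fact that taking fixed points of a continuous action of a compact group is an exact functor that preserves those limits that survive the passage. First I would recall that, since $U(1)$ is compact, the assignment $A\mapsto A^{U(1)}$ is exact on short exact sequences of $U(1)$-C*-algebras: the conditional expectation $\Phi_A(a)=\int_{U(1)}\alpha_\lambda(a)\,d\lambda$ realizes $A^{U(1)}$ as a complemented subspace and intertwines equivariant $*$-homomorphisms, so a $U(1)$-equivariant injection (resp.\ surjection) restricts-corestricts to an injection (resp.\ surjection) of fixed-point algebras. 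Applying $\Phi$ to the four corners of~\eqref{eq:pbdiag} and to the four maps therefore yields the commuting square~\eqref{eq:pbfixed} with $\widetilde{\pi_1},\widetilde{f}$ the restrictions of $\pi_1,f$. One must also identify the corners: $(C^*(E))^{U(1)}$ and $(C^*(E''))^{U(1)}$ are the stated gauge-invariant subalgebras by~\eqref{fixsub}; the key point is that $\big(C^*(E')\otimes C(S^1)\big)^{U(1)}\cong C^*(E')$ via $a\mapsto a\otimes 1$, because the $U(1)$-action is the standard one on the $C(S^1)$-tensorand, so an element fixed by the diagonal-trivial action on $C^*(E')$ and the translation action on $C(S^1)$ must be constant in the circle variable; similarly $\big(C^*(E'')\otimes C(S^1)\big)^{U(1)}\cong C^*(E'')$, and under this identification $\delta$ restricts to the inclusion $\iota$ while $\pi_2\otimes\id$ restricts to $\pi_2$.

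Having matched the diagram, it remains to verify the two universal-property conditions for~\eqref{eq:pbfixed} to be a pullback: that the induced map $\widetilde F$ into the pullback C*-algebra of $\pi_2$ and $\iota$ is injective and surjective. Injectivity is immediate, since $\widetilde f$ is the restriction of the injective map $f$ from Theorem~\ref{main}. For surjectivity I would again invoke the characterization of~\cite[Proposition~3.1]{P99} (as in the proof of Theorem~\ref{main}): since $\widetilde{\pi_1}$ and $\pi_2$ are surjective, it suffices to check $\ker(\pi_2)\subseteq \widetilde f\big(\ker(\widetilde{\pi_1})\big)$. But $\ker(\pi_2)=\ker(\pi_2\otimes\id)\cap\big(C^*(E')\otimes\C 1\big)$ after the identification above, $\ker(\widetilde{\pi_1})=\ker(\pi_1)^{U(1)}=\ker(\pi_1)\cap C^*(E)^{U(1)}$ by exactness of $\Phi$, and $f$ maps $C^*(E)^{U(1)}$ into the fixed-point subalgebra $C^*(E')$ of $C^*(E')\otimes C(S^1)$; so the inclusion~\eqref{kernel} from Theorem~\ref{main}, intersected with the relevant fixed-point subspaces and transported through these isomorphisms, delivers exactly $\ker(\pi_2)\subseteq\widetilde f(\ker\widetilde{\pi_1})$. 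Alternatively, one can bypass re-checking surjectivity entirely: a pullback square of $U(1)$-C*-algebras stays a pullback after applying the fixed-point functor, because $\Phi$ commutes with the norm-closed fibre product $\{(a,b): \text{(image conditions)}\}$, being a pointwise-defined (integration) operation on each tensor factor.

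The main obstacle I expect is the bookkeeping around the corner $C^*(E')\otimes C(S^1)$: one must be careful that the $U(1)$-action there is \emph{only} on $C(S^1)$ (so the fixed points really collapse to $C^*(E')$, not to $C^*(E')^{U(1)}\otimes C(S^1)$), and that under this collapse the map $f$ — which sends $S_e\mapsto S_e\otimes u$, an element \emph{not} fixed by the action on the target — nevertheless carries gauge-\emph{invariant} elements $S_xS_y^*$ with $|x|=|y|$ to $S_xS_y^*\otimes 1$, i.e.\ genuinely lands in $C^*(E')\cong\big(C^*(E')\otimes C(S^1)\big)^{U(1)}$; this is precisely what equivariance of $f$ guarantees, but it is the step most easily mishandled. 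Once the corner identifications and the equivariance of each leg are pinned down, the rest is the formal statement that exact functors preserve this kind of pullback, combined with the already-established injectivity of $f$.
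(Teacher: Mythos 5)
Your proposal is correct and follows essentially the same route as the paper, which justifies Corollary~\ref{main_fixed} in one line by appealing to the equivariance of all maps in diagram~\eqref{eq:pbdiag} and the compactness of $U(1)$; your conditional-expectation argument, the identification $\bigl(C^*(E')\otimes C(S^1)\bigr)^{U(1)}\cong C^*(E')$, and the transported kernel condition are precisely the details that this appeal compresses. Nothing further is needed.
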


\subsection{Mayer--Vietoris exact sequences in K-theory}
\label{ss:MV}

Any pullback diagram of C*-algebras induces a six-term exact sequence in K-theory that goes under the name of 
Mayer--Vietoris exact sequence (see for example \cite[Section~1.3]{BHMS05}, \cite[Section~1.2.3]{BM} and 
\cite[Theorem~21.5.1]{B98}). 
In this subsection, we describe the Mayer--Vietoris exact sequence for trimmable graph C*-algebras 
and their gauge-invariant subalgebras.

Let $E$ be a $\bar{v}$-trimmable graph. The Mayer--Vietoris six-term exact sequence associated to 
the diagram \eqref{eq:pbdiag} reads
{\footnotesize
\begin{equation}\label{6term}
\xymatrixcolsep{3.5pc}
\xymatrix{
K_0\big(C^*(E)\big)\ar[r]^-{\left({\pi_1}_{*}, {f}_{*}\right)} & K_0\big(C^*(E'')\big)\!\oplus\! K_0\big(C^*(E')\!\otimes\! C(S^1)\big) \ar[r]^-{\delta_{*}-(\pi_2 \otimes \id)_*}  & K_0\big(C^*(E'')\!\otimes\! 
C(S^1)\big)\ar[d]^{\partial_{01}}\\ 
K_1\big(C^*(E'')\!\otimes\!C(S^1)\big)\ar[u]^{\partial_{10}} 
& K_1\big(C^*(E'')\big)\!\oplus\! K_1\big(C^*(E')\!\otimes\! C(S^1)\big) \ar[l]^-{\delta_{*}-
(\pi_2 \otimes \id)_*} & K_1\big(C^*(E)\big). \ar[l]^-{\left({\pi_1}_{*}, {f}_{*}\right)}
}
\end{equation}
}

\noindent
Here $\partial_{10}$ is a Milnor connecting homomorphism and $\partial_{01}$ 
is a Bott connecting homomorphism. Furthermore,
the pullback diagram (\ref{eq:pbfixed}) of fixed-point subalgebras leads to another 
six-term exact sequence in K-theory:
\begin{equation}\label{6termfixed}
\begin{gathered}
\xymatrixcolsep{2.5pc}
\xymatrix{
K_0(C^*(E)^{U(1)}) \ar[r]^-{\left(\widetilde{\pi_1}_{*}, \widetilde{f}_{*}\right)}
& K_0(C^*(E'')^{U(1)})\oplus K_0(C^*(E')) \ar[r]^-{{\pi_2}_{*}-\iota_{*}}
& K_0(C^*(E'')) \ar[d]^{\partial_{01}}\\
K_1(C^*(E''))\ar[u]^{\partial_{10}}
& K_1(C^*(E'')^{U(1)}) \oplus K_1(C^*(E')) \ar[l]^-{{\pi_2}_{*}-\iota_{*}}
& K_1(C^*(E)^{U(1)}). \ar[l]^-{\left(\widetilde{\pi_1}_{*}, \widetilde{f}_{*}\right)}}
\end{gathered}
\end{equation}
Next, since gauge-invariant subalgebras of  graph C*-algebras are always~AF-algebras,
 their odd-K-groups vanish, so we obtain a simpler six-term exact sequence:
\begin{equation}\label{af6term}
\begin{gathered}
\xymatrixcolsep{2.5pc}
\xymatrix{
K_0(C^*(E)^{U(1)}) \ar[r]^-{\left(\widetilde{\pi_1}_{*}, \widetilde{f}_{*}\right)}
& K_0(C^*(E'')^{U(1)})\oplus K_0(C^*(E')) \ar[r]^-{{\pi_2}_{*}-\iota_{*}}
& K_0(C^*(E'')) \ar[d]\\
K_1(C^*(E''))\ar[u]^{\partial_{10}}
& K_1(C^*(E')) \ar[l]^-{{\pi_2}_{*}-\iota_{*}}
& 0. \ar[l]}
\end{gathered}
\end{equation}
\section{Examples}\label{sec:applications}

\subsection{A one-loop extension of the Cuntz algebra $\mathcal{O}_2$}

Recall that the Cuntz algebra $\mathcal{O}_2$ can be viewed as the graph C*-algebra of the graph $\Lambda''$
consisting of two loops starting at the unique vertex $w$.
Let us now consider the one-loop extension $\Lambda$ of this graph obtained by adding one
outgoing edge from $w$ to $\bar{v}$ (see Figure~\ref{fig:cuntz}).

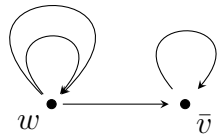
\begin{figure}[h]
\begin{center}
\begin{tikzpicture}[>=stealth,node distance=50pt,main node/.style={circle,inner sep=2pt},
freccia/.style={->,shorten >=2pt,shorten <=2pt},
ciclo/.style={out=130, in=50, loop, distance=40pt, ->},
circle/.style={out=135, in=45, loop, distance=65pt, ->}]

      \node[main node] (1) {};
      \node (2) [right of=1] {};

      \filldraw (1) circle (0.06) node[below left] {$w$};
      \filldraw (2) circle (0.06) node[below right] {$\bar{v}$};

      \path[freccia] (1) edge[ciclo] (1);
      		\path[freccia] (1) edge[circle] (1);
			\path[freccia] (2) edge[ciclo] (2);

      \path[freccia] (1) edge (2);
			   
\end{tikzpicture}
\end{center}
\vspace{-10pt}
\caption{The graph $\Lambda$.}
\label{fig:cuntz}
\end{figure}
\noindent 
Denote by $\Lambda'$ the one-sink extension of $\Lambda''$ obtained by adding one
outgoing edge from $w$ to $\bar{v}$. 
By~Theorem~\ref{main},
we have the following $U(1)$-equivariant pullback structure:
\begin{equation}\label{cuntz}
\begin{gathered}
\xymatrix{
& C^*(\Lambda) \ar[ld] \ar[rd] \\
 \mathcal{O}_2 \ar[rd]
& 
& C^*(\Lambda')\otimes C(S^1). \ar[ld]\\
& \mathcal{O}_2\otimes C(S^1)
}
\end{gathered}
\end{equation}

The fixed-point subalgebra $\mathcal{O}_2^{U(1)}$ is isomorphic to the CAR algebra 
$M_{2^\infty}(\C)$ \cite[\textsection 1.5]{Cu77}. 
Hence, by~Corollary~\ref{main_fixed}, we have another pullback diagram:
\begin{equation}\label{car}
\begin{gathered}
\xymatrix{
& C^*(\Lambda)^{U(1)} \ar[ld] \ar[rd] \\
 M_{2^{\infty}}(\C) \ar[rd]
& 
& C^*(\Lambda'). \ar[ld]\\
& \mathcal{O}_2
}
\end{gathered}
\end{equation}
The diagram~\eqref{af6term} for this example reads
\begin{equation}\label{6cuntz}
\begin{gathered}
\xymatrixcolsep{2.5pc}
\xymatrix{
K_0(C^*(\Lambda)^{U(1)}) \ar[r]
& K_0(M_{2^{\infty}}(\C))\oplus K_0(C^*(\Lambda')) \ar[r]
& K_0(\mathcal{O}_2) \ar[d]\\
K_1(\mathcal{O}_2)\ar[u]
& K_1(C^*(\Lambda')) \ar[l]
& 0. \ar[l]}
\end{gathered}
\end{equation}
Now, to compute $K_0(C^*(\Lambda)^{U(1)})$, we observe that:
\begin{enumerate}
\item $K_0(\mathcal{O}_2)=0=K_1(\mathcal{O}_2)$ \cite{Cu78}, \cite[Theorem~3.7, Theorem~3.8]{Cu81K},
\item $K_0(M_{2^{\infty}}(\C))\cong\Z[\frac{1}{2}]$, 
where $\Z[\frac{1}{2}]$ is the group of~dyadic rationals (e.g., see \cite[Example~IV.3.4]{Dav96}),
\item $K_0(C^*(\Lambda'))=\Z[P_{\bar{v}}]$ and $K_1(C^*(\Lambda'))=0$,
where $P_{\bar{v}}$ is the vertex projection of $\bar{v}$.
\end{enumerate}
Here the last statement follows from \cite[Lemma~5.2]{RTW04} because $\Lambda'$ is a one-sink extension~of~$\Lambda''$.
Hence, from the diagram \eqref{6cuntz}, we conclude:
\begin{prop}
Let $C^*(\Lambda)$ be the graph C*-algebra of the graph given by Figure~\ref{fig:cuntz}. 
The~gauge-invariant subalgebra $C^*(\Lambda)^{U(1)}$ has the~following even-K-group: 
\[
K_0(C^*(\Lambda)^{U(1)})\cong K_0(M_{2^{\infty}}(\C))\oplus K_0(C^*(\Lambda'))\cong\Z[\frac{1}{2}]\oplus \Z[P_{\bar{v}}].
\]
\end{prop}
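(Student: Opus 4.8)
The plan is to feed the three facts recorded just above into the Mayer--Vietoris six-term exact sequence \eqref{6cuntz}, which is the instance of \eqref{af6term} attached to the $\bar{v}$-trimmable graph $\Lambda$ of Figure~\ref{fig:cuntz} (so that $E''=\Lambda''$ is the two-loop graph, $C^*(\Lambda'')=\mathcal{O}_2$, $C^*(\Lambda'')^{U(1)}\cong M_{2^{\infty}}(\C)$ by \cite[\textsection 1.5]{Cu77}, and $E'=\Lambda'$). First I would nail down the inputs: $K_0(\mathcal{O}_2)=K_1(\mathcal{O}_2)=0$ is \cite{Cu78,Cu81K}; $K_0(M_{2^{\infty}}(\C))\cong\Z[\frac{1}{2}]$ follows by realising the CAR algebra as $\varinjlim M_{2^n}(\C)$ along the block-diagonal embeddings and computing $K_0$ of the limit as $\varinjlim\big(\Z\xrightarrow{\times 2}\Z\xrightarrow{\times 2}\cdots\big)$ (see \cite[Example~IV.3.4]{Dav96}); and $K_0(C^*(\Lambda'))=\Z[P_{\bar{v}}]$ together with $K_1(C^*(\Lambda'))=0$ follow from \cite[Lemma~5.2]{RTW04} once one observes that $\Lambda'$ is a one-sink extension of $\Lambda''$, which is clear because adjoining to the two-loop graph a single edge $w\to\bar{v}$ makes $\bar{v}$ a sink.

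Next I would substitute these groups into \eqref{6cuntz} and run an exactness chase. Since the source $K_1(\mathcal{O}_2)$ of the Milnor connecting homomorphism $\partial_{10}$ is zero, exactness at $K_0(C^*(\Lambda)^{U(1)})$ forces $\big(\widetilde{\pi_1}_*,\widetilde f_*\big)$ to be injective; since the target $K_0(\mathcal{O}_2)$ of the following arrow is zero, that arrow vanishes, so exactness at $K_0(M_{2^{\infty}}(\C))\oplus K_0(C^*(\Lambda'))$ forces $\big(\widetilde{\pi_1}_*,\widetilde f_*\big)$ to be surjective as well. Hence \eqref{6cuntz} degenerates to an isomorphism
\[
K_0\big(C^*(\Lambda)^{U(1)}\big)\;\xrightarrow{\,\cong\,}\;K_0(M_{2^{\infty}}(\C))\oplus K_0(C^*(\Lambda')),
\]
and combining this with the two identifications of the previous paragraph gives $K_0(C^*(\Lambda)^{U(1)})\cong\Z[\frac{1}{2}]\oplus\Z[P_{\bar{v}}]$.

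I do not expect a genuine obstacle: the argument is pure bookkeeping in a six-term sequence both of whose corner terms $K_0(\mathcal{O}_2)$ and $K_1(\mathcal{O}_2)$ vanish. The only points that deserve care are (i) verifying that $\Lambda'$ really satisfies the hypotheses of \cite[Lemma~5.2]{RTW04}, and (ii) checking that the two arrows flanking $\big(\widetilde{\pi_1}_*,\widetilde f_*\big)$ in \eqref{6cuntz} are both the zero map, so that the long exact sequence collapses as claimed. Equivalently, one may simply remark that \eqref{6cuntz} reduces to the short exact sequence $0\to K_0(C^*(\Lambda)^{U(1)})\to K_0(M_{2^{\infty}}(\C))\oplus K_0(C^*(\Lambda'))\to 0$ and read off the result.
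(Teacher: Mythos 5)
Your proposal is correct and follows essentially the same route as the paper: both feed the facts $K_0(\mathcal{O}_2)=K_1(\mathcal{O}_2)=0$, $K_0(M_{2^{\infty}}(\C))\cong\Z[\tfrac{1}{2}]$, and $K_0(C^*(\Lambda'))=\Z[P_{\bar{v}}]$, $K_1(C^*(\Lambda'))=0$ (via \cite[Lemma~5.2]{RTW04}) into the Mayer--Vietoris sequence \eqref{6cuntz} and observe that the vanishing of both corner terms forces $\big(\widetilde{\pi_1}_*,\widetilde{f}_*\big)$ to be an isomorphism. Your exactness chase is just a more explicit spelling-out of the step the paper compresses into ``Hence, from the diagram \eqref{6cuntz}, we conclude.''
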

The above result agrees with \cite[Proposition~4.1.2]{PR96}, where the K-theory of fixed-point subalgebras 
of arbitrary Cuntz--Krieger algebras (note that $\Lambda$ is finite and without sinks) was computed using a dual 
Pimsner--Voiculescu sequence.

\subsection{A one-loop extension of the Toeplitz algebra $\mathcal{T}$}\label{toe}
Let us now consider an~example that goes beyond~\cite[Proposition~4.1.2]{PR96} and connects with Section~\ref{teardrops}. 
To this end, we take the standard graph $Q_2''$ encoding the Toeplitz algebra~$\mathcal{T}$, that is the graph consisting
of two vertices $v_0^0$ and $v^1_0$, one loop $e^0_0$ based at $v_0^0$, and one edge $e^{01}_0$ from $v_0^0$ to~$v_0^1$.
Note that $v_0^1$ is a sink, so $\mathcal{T}$ is not a Cuntz--Krieger algebra. 
Next, we consider the one-loop extension $Q_2$ of this graph obtained by adding one
outgoing edge from $v^0_0$ to $v^1_1$ (see Figure~\ref{fig:cuntz2}).
Denote by $Q_2'$ the one-sink extension of $Q_2''$ obtained by adding one
outgoing edge from $v^0_0$ to $v^1_1$. The C*-algebra of the graph $Q_2'$
is isomorphic to the C*-algebra of the equatorial Podle\'s quantum sphere $S^2_{q\infty}$ \cite{Pod87, HS02}.

\begin{figure}[h]
\begin{center}
\begin{tikzpicture}[>=stealth,node distance=40pt,main node/.style={circle,inner sep=2pt},
freccia/.style={->,shorten >=1pt,shorten <=1pt},
ciclo/.style={out=130, in=50, loop, distance=40pt, ->},
ciclo2/.style={out=235, in=315, loop, distance=40pt, ->}]

      \node[main node] (1) {};
      \node (2) [below of=1] {};
      \node (3) [left of=2]{};
      \node (4) [right of=2]{};

      \filldraw (1) circle (0.06) node[right] {$\ v_0^0$};
      \filldraw (3) circle (0.06) node[left] {$v_0^1$};
      \filldraw (4) circle (0.06) node[right] {$v_1^1$};

			\path[freccia] (1) edge[ciclo] (1);
			\path[freccia] (3) edge[ciclo2] (3);

      \path[freccia] (1) edge (4);         							
\path[freccia] (1) edge (3)	; 
\end{tikzpicture}
\end{center}
\vspace{-10pt}
\caption{The graph $Q_2$.}
\label{fig:toeplitz}
\end{figure}
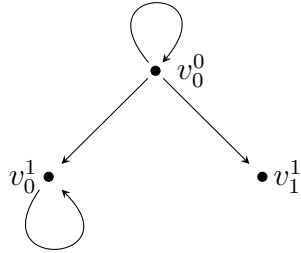
\noindent
By Theorem~\ref{main},
we have the following $U(1)$-equivariant pullback structure:
\begin{equation}\label{cuntz2}
\begin{gathered}
\xymatrix{
& C^*(Q_2) \ar[ld] \ar[rd] \\
 \mathcal{T} \ar[rd]
& 
& C(S^2_{q\infty})\otimes C(S^1). \ar[ld]\\
& \mathcal{T}\otimes C(S^1)
}
\end{gathered}
\end{equation}

To compute the fixed-point subalgebra $\mathcal{T}^{U(1)}$, we take advantage of~\eqref{fixsub} and combine it
with the fact that $\mathcal{T}$ is the unital universal C*-algebra generated by a single isometry~$s$~\cite{c-la66}.
Indeed, identifying the isometry $s$ with the sum of partial isometries $S_{e^0_0}+S_{e^{01}_0}$, one easily computes
that $\mathcal{T}^{U(1)}=\overline{{\rm span}}\left\{s^k(s^*)^k:k\in\mathbb{N} \right\}$, where $s^0(s^*)^0=1$.
Hence $\mathcal{T}^{U(1)}$
is a commutative AF-algebra generated by countably many
orthogonal projections, so it is isomorphic to 
the C*-algebra of continuous complex-valued functions on the Cantor set $X$. 
Thus, by Corollary~\ref{main_fixed}, we have another pullback diagram:
\begin{equation}\label{car2}
\begin{gathered}
\xymatrix{
& C^*(Q_2)^{U(1)} \ar[ld] \ar[rd] \\
C(X) \ar[rd]
& 
& C(S^2_{q\infty}). \ar[ld]\\
& \mathcal{T}
}
\end{gathered}
\end{equation}
The diagram~\eqref{af6term} for this example reads
\begin{equation}
\xymatrixcolsep{2.5pc}
\xymatrix{
K_0(C^*(Q_2)^{U(1)}) \ar[r]
& K_0(C(X))\oplus K_0(C(S^2_{q\infty})) \ar[r]
& K_0(\mathcal{T}) \ar[d]\\
K_1(\mathcal{T})\ar[u]
& K_1(C(S^2_{q\infty})) \ar[l]
& 0. \ar[l]}
\end{equation}
The K-groups of all the algebras involved here except for $C^*(Q_2)^{U(1)}$ are as follows:
\begin{enumerate}
\item $K_0(\mathcal{T})=\Z[1]$ and $K_1(\mathcal{T})=0$, 
\item $K_0(C(X))\cong\bigoplus_{\mathbb{N}}\Z$, 
\item $K_0(C(S^2_{q\infty}))\cong\Z^2$ and $K_1(C(S^2_{q\infty}))=0$.
\end{enumerate}
Hence there is a short exact sequence
\begin{equation}
0\longrightarrow K_0(C^*(Q_2)^{U(1)})\longrightarrow \bigoplus_{\mathbb{N}}\Z 
\longrightarrow \Z[1] \longrightarrow 0,
\end{equation}
so $K_0(C^*(Q_2)^{U(1)})$ is a countably-generated subgroup of a free abelian group. Thus we have proved:
\begin{prop}\label{toepro}
Let $C^*(Q_2)$ be the graph C*-algebra of the graph given by Figure~\ref{fig:toeplitz}. 
The~gauge-invariant subalgebra $C^*(Q_2)^{U(1)}$
has the following even-K-group: 
\[
K_0(C^*(Q_2)^{U(1)})\cong\bigoplus_{\mathbb{N}}\Z.
\]
\end{prop}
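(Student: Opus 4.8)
The plan is to read the whole statement off the Mayer--Vietoris sequence \eqref{af6term} attached to the pullback square \eqref{car2}, feeding in the three $K$-group inputs already listed, and then to close with an elementary remark about free abelian groups. First I would note that $C^*(Q_2)^{U(1)}$, being a gauge-invariant subalgebra of a graph C*-algebra, is an AF-algebra, so $K_1(C^*(Q_2)^{U(1)})=0$. Together with $K_1(\mathcal{T})=0$, this makes the Milnor connecting map $\partial_{10}$ in \eqref{af6term} vanish (hence the map out of $K_0(C^*(Q_2)^{U(1)})$ is injective), while the triviality of $K_1(C^*(Q_2)^{U(1)})$ makes the map into $K_0(\mathcal{T})$ surjective. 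Thus the six-term sequence collapses to the short exact sequence
\[
0\longrightarrow K_0\big(C^*(Q_2)^{U(1)}\big)\longrightarrow K_0(C(X))\oplus K_0(C(S^2_{q\infty}))\longrightarrow K_0(\mathcal{T})\longrightarrow 0 .
\]
Substituting $K_0(C(X))\cong\bigoplus_{\mathbb{N}}\Z$, $K_0(C(S^2_{q\infty}))\cong\Z^2$ and $K_0(\mathcal{T})\cong\Z$, and using $\bigoplus_{\mathbb{N}}\Z\oplus\Z^2\cong\bigoplus_{\mathbb{N}}\Z$, I obtain the short exact sequence $0\to K_0(C^*(Q_2)^{U(1)})\to\bigoplus_{\mathbb{N}}\Z\to\Z\to 0$.

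It then remains a purely group-theoretic matter. Write $G:=K_0(C^*(Q_2)^{U(1)})$. Since $G$ embeds into the free abelian group $\bigoplus_{\mathbb{N}}\Z$, it is itself free abelian and countable. If $G$ were finitely generated, then $\bigoplus_{\mathbb{N}}\Z$, being an extension of the finitely generated group $\Z$ by $G$, would be finitely generated --- a contradiction. Hence $G$ is a countable, non-finitely-generated free abelian group, so it has rank $\aleph_0$; that is, $G\cong\bigoplus_{\mathbb{N}}\Z$, which is the claim. (Equivalently, the quotient $\Z$ being free, the sequence splits and $G$ is a direct summand of $\bigoplus_{\mathbb{N}}\Z$, and the same rank count applies.)

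I expect no serious obstacle inside this argument: the $K$-theoretic step is routine once the square \eqref{car2} is available, so the genuine content sits upstream --- in identifying $\mathcal{T}^{U(1)}$ with $C(X)$ and $C^*(Q_2')$ with $C(S^2_{q\infty})$ so that Corollary~\ref{main_fixed} applies, and in the three input computations, of which $K_0(C(S^2_{q\infty}))\cong\Z^2$ (quoted from the literature on the equatorial Podle\'s quantum sphere) is the least elementary. The one place within the present proof that calls for a moment's care is the final passage: one should not forget that subgroups of free abelian groups are free, and that the conclusion $G\cong\bigoplus_{\mathbb{N}}\Z$ genuinely relies on the cokernel being finitely generated in order to exclude $G$ being finitely generated.
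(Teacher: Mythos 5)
Your argument is correct and follows essentially the same route as the paper: feed the three $K$-group inputs into the collapsed Mayer--Vietoris sequence \eqref{af6term} for the pullback \eqref{car2}, obtain the short exact sequence $0\to K_0(C^*(Q_2)^{U(1)})\to\bigoplus_{\mathbb{N}}\Z\to\Z\to 0$, and conclude by freeness of subgroups of free abelian groups. Your closing group-theoretic step (ruling out finite generation to pin down the rank) is in fact spelled out more carefully than in the paper, which stops at ``countably-generated subgroup of a free abelian group.''
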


To end with, let us observe that,  since any AF-algebra is Morita equivalent to a~graph C*-algebra by \cite[Theorem~1]{Dr00},
in principle, one could compute $K_0(C^*(Q_2)^{U(1)})$ using~\cite{Dr00}. However, such a calculation might still be difficult,
and our method reduces the calculation of the even K-group of the gauge-invariant subalgebra of a graph algebra
 to an analogous  calculation for a
\emph{simpler}  graph algebra. In this particular case, the latter calculation is immediate.

\subsection{The Vaksman--Soibelman quantum spheres and projective spaces}

In 1991, Vaksman and Soibelman \cite{VS91} defined a class of odd-dimensional quantum spheres 
$\bS^{2n+1}_q$, where $n$ is a non-negative integer. 
Their C*-algebras can be viewed as $q$-deformations of the C*-algebras of continuous functions on odd-dimensional spheres 
$\bS^{2n+1}$, 
where  $q\in[0,1]$ is a deformation parameter. A~decade later, Hong and Szyma\'nski \cite{HS02} 
showed that in any dimension and for $q\in[0,1)$ these spheres can be realised as graph C*-algebras. 
In the same paper, they define even-dimensional
noncommutative balls $C(\bB^{2n}_q)$ using noncommutative double supspension, and in \cite{HS08} they give their
graph C*-algebraic presentation. 

\subsubsection{Spheres}

The C*-algebra $C(\bS^{2n+1}_q)$ of the $(2n+1)$-dimensional 
quantum sphere  is isomorphic, for any $q\in[0,1)$, to the graph C*-algebra of the graph 
$L_{2n+1}$ \cite[Theorem~4.4]{HS02} (see Figure \ref{fig:sphere}) with
\vspace*{-2mm}
\begin{list}{{\tiny\raisebox{1pt}{$\blacksquare\;$}}}{\leftmargin=2em \itemsep=2pt}
\item $n+1$ vertices $\{v_0,v_1,\ldots,v_n\}$,
\item one edge $e_{i,j}$ from $v_i$ to $v_j$ for all $0\leq i < j\leq n$,
\item one loop $e_i$ over each vertex $v_i$ for all $0\leq i\leq n$.
\end{list}

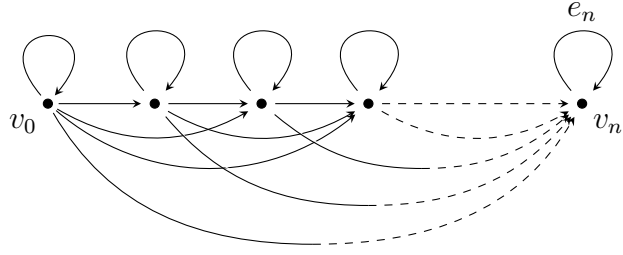
\begin{figure}[h]
\begin{center}
\begin{tikzpicture}[>=stealth,node distance=40pt,main node/.style={circle,inner sep=2pt},
freccia/.style={->,shorten >=1pt,shorten <=1pt},
ciclo/.style={out=130, in=50, loop, distance=40pt, ->}]

      \node[main node] (1) {};
      \node (2) [right of=1] {};
      \node (3) [right of=2] {};
      \node (4) [right of=3] {};
      \node (5) [right of=4] {};
      \node (6) [right of=5] {};

      \filldraw (1) circle (0.06) node[below left] {$v_0$};
      \filldraw (2) circle (0.06);
      \filldraw (3) circle (0.06);
      \filldraw (4) circle (0.06);
      \filldraw (6) circle (0.06) node[below right] {$v_n$};

      \path[freccia] (1) edge[ciclo] (1);
			\path[freccia] (2) edge[ciclo] (2);
			\path[freccia] (3) edge[ciclo] (3);
			\path[freccia] (4) edge[ciclo] (4);
      \path[freccia] (6) edge[ciclo] node[above] {$e_n$} (6);

      \path[freccia] (1) edge (2)
			               (2) edge (3)
										 (3) edge (4);
      \path[freccia,dashed] (4) edge (6);

      \path[freccia] (1) edge[bend right] (3)
			               (1) edge[bend right=40] (4)
										 (2) edge[bend right] (4);

      \path[white] (1) edge[bend right=60] coordinate (7) (6)
			               (2) edge[bend right=50] coordinate (8) (6)
										 (3) edge[bend right=40] coordinate (9) (6);

      \path[shorten <=1pt] (1) edge[out=-60,in=180] (7)
			               (2) edge[out=-50,in=180] (8)
										 (3) edge[out=-40,in=180] (9);

      \path[->,dashed,shorten >=1pt] (7) edge[out=0,in=240] (6)
			               (8) edge[out=0,in=230] (6)
										 (9) edge[out=0,in=220] (6);
										
			\path[freccia,dashed] (4) edge[bend right,dashed] (6);
\end{tikzpicture}
\end{center}
\vspace{-20pt}
\caption{The graph $L_{2n+1}$ of $C(S^{2n+1}_q)$.}
\label{fig:sphere}
\end{figure}
\noindent
Throughout this subsection, we denote the Cuntz--Krieger $L_{2n+1}$-family 
by $\{S,P\}$. To simplify notation, we set $S_{e_{i,j}}:=S_{i,j}$, $S_{e_j}:=S_j$ and $P_{v_j}:=P_j$, where $0\leq i<j\leq n$.

The C*-algebra $C(\bB^{2n}_q)$ of the Hong--Szyma\'nski $2n$-dimensional 
quantum ball \cite{HS08, HS06}  can be viewed as the graph C*-algebra of the graph $\Gamma_{2n}$ obtained from 
$L_{2n+1}$ by removing the loop~$e_n$ (see Figure~\ref{fig:ball}).
\begin{figure}[h]
\begin{center}
\begin{tikzpicture}[>=stealth,node distance=40pt,main node/.style={circle,inner sep=2pt},
freccia/.style={->,shorten >=1pt,shorten <=1pt},
ciclo/.style={out=130, in=50, loop, distance=40pt, ->}]

      \node[main node] (1) {};
      \node (2) [right of=1] {};
      \node (3) [right of=2] {};
      \node (4) [right of=3] {};
      \node (5) [right of=4] {};
      \node (6) [right of=5] {};

      \filldraw (1) circle (0.06) node[below left] {$v_0$};
      \filldraw (2) circle (0.06);
      \filldraw (3) circle (0.06);
      \filldraw (4) circle (0.06);
      \filldraw (6) circle (0.06) 
      node[below right] {$v_n$};

      \path[freccia] (1) edge[ciclo] (1);
			\path[freccia] (2) edge[ciclo] (2);
			\path[freccia] (3) edge[ciclo] (3);
			\path[freccia] (4) edge[ciclo] (4);

      \path[freccia] (1) edge (2)
			               (2) edge (3)
										 (3) edge (4);
      \path[freccia,dashed] (4) edge (6);

      \path[freccia] (1) edge[bend right] (3)
			               (1) edge[bend right=40] (4)
										 (2) edge[bend right] (4);

      \path[white] (1) edge[bend right=60] coordinate (7) (6)
			               (2) edge[bend right=50] coordinate (8) (6)
										 (3) edge[bend right=40] coordinate (9) (6);

      \path[shorten <=1pt] (1) edge[out=-60,in=180] (7)
			               (2) edge[out=-50,in=180] (8)
										 (3) edge[out=-40,in=180] (9);

      \path[->,dashed,shorten >=1pt] (7) edge[out=0,in=240] (6)
			               (8) edge[out=0,in=230] (6)
										 (9) edge[out=0,in=220] (6);
										
			\path[freccia,dashed] (4) edge[bend right,dashed] (6);

\end{tikzpicture}
\end{center}
\vspace{-20pt}
\caption{The graph $\Gamma_{2n}$ of $C(B^{2n}_q)$.}
\label{fig:ball}
\end{figure}
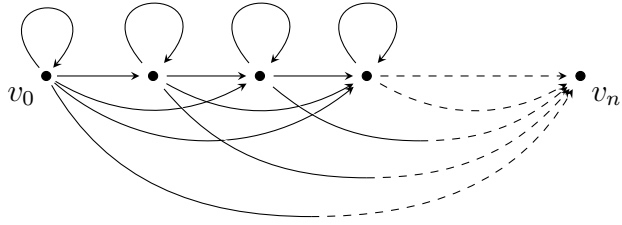

Since the graph $L_{2n+1}$ giving the Vaksman--Soibelman $(2n+1)$-sphere is $v_n$-trimmable, we immediately conclude from Theorem~\ref{main}
that the  diagram
\begin{equation}\label{vspull}
\begin{gathered}
\xymatrix{
& C(S^{2n+1}_q) \ar[ld]_{\pi_1^n} \ar[rd]^{f^n} \\
 C(S^{2n-1}_q) \ar[rd]^{\delta^n}
& 
& C(B^{2n}_q)\otimes C(S^1) \ar[ld]_{\pi_2^n\otimes{\id}}\\
& C(S^{2n-1}_q)\otimes C(S^1)
}
\end{gathered}
\end{equation}
is a pullback diagram of $U(1)$-C*-algebras. 
All the maps in the above diagram are special cases of the maps in the diagram \eqref{eq:pbdiag}.

In this example, the six-term exact sequence in K-theory given by the diagram \eqref{6term} reads
{\footnotesize
\begin{equation*}
%\xymatrixcolsep{3pc}
\xymatrix{
K_0\!\left(C(S_q^{2n+1})\right)\!\ar[r]^-{\left({\pi^n_1}_{*}, {f^n}_{*}\right)}& 
\!K_0\big(C(S_q^{2n-1})\big)\!\oplus\! K_0\big(C(B_q^{2n})\!\otimes\!C(S^1)\big)
\ar[r]^-{\delta^n_*\!-\!(\!\pi^n_2 \otimes \id)_*}\!
&K_0\big(C(S_q^{2n-1})\!\otimes\!C(S^1)\big)\ar[d]^{\partial_{01}}\\
K_1\big(C(S_q^{2n-1})\otimes C(S^1)\big)\ar[u]^{\partial_{10}} 
& K_1\big(C(S_q^{2n-1})\big)\!\oplus\! K_1\big(C(B_q^{2n})\!\otimes\! C(S^1)\big) \ar[l]^-{\ \delta^n_*\!-\!(\!\pi^n_2 \otimes \id)_*} 
&
K_1\big(C(S_q^{2n+1})\big).\ar[l]^-{\left({\pi^n_1}_{*}, {f^n}_{*}\right)}\\
}
\end{equation*}
}

\noindent
This diagram allows us to compute inductively an explicit formula of $K_1$-generators of~the~quantum odd spheres,
which then could be compared with results in \cite[Section~4.3]{HL04} and \cite{CET12}.

\subsubsection{Projective spaces}

We define the C*-algebra $C(\mathbb{C}\mathrm{P}^n_q)$ of the Vaksman--Soibelman quantum complex projective space 
$\mathbb{C}\mathrm{P}^n_q$ \cite{VS91} as the fixed-point subalgebra of
 $C(\bS^{2n+1}_q)$ under the gauge action of~$U(1)$. It can be viewed as the graph C*-algebra of the graph $M_n$
 consisting of the same vertices $v_0$, ..., $v_n$ as in the graph $L_{2n+1}$ (vertex projections are gauge invariant), with no 
 loops, and countably many edges between all pairs of vertices~\cite{HS02}. (See Figure~\ref{cpn}.)
 %, where thick arrows denote  infinitely many edges.)

\begin{figure}[h]
\begin{center}
\hspace*{-5mm}
\begin{tikzpicture}[>=stealth,node distance=1.8cm,main node/.style={circle,inner sep=2pt},
freccia/.style={->,shorten >=1pt,shorten <=1pt},
ciclo/.style={out=130, in=50, loop, distance=2cm, ->}]

      \node[main node] (1) {};
      \node (2) [right of=1] {};
      \node (3) [right of=2] {};
      \node (4) [right of=3] {};
      \node (5) [right of=4] {};
      \node (6) [right of=5] {};

      \filldraw (1) circle (0.06) node[below left] {$v_0$};
      \filldraw (2) circle (0.06);
      \filldraw (3) circle (0.06);
      \filldraw (4) circle (0.06);
      \filldraw (6) circle (0.06) 
      node[below right] {$v_n$};
      
%\path[->,shorten >=1pt,shorten <=1pt] (1) edge node[fill=white] {$\infty$} (3);

      \path[->,freccia] (1) edge node[fill=white] {$\infty$}  (2) 
			               (2) edge node[fill=white] {$\infty$} (3)
										 (3) edge node[fill=white] {$\infty$}(4);
      \path[->,freccia,dashed] (4) edge  node[fill=white] {$\infty$} (6);

      \path[freccia] (1) edge[bend right] node[fill=white] {$\infty$}(3)
			               (1) edge[bend right=40] node[fill=white] {$\infty$}(4)
										 (2) edge[bend right] node[fill=white] {$\infty$} (4);
      \path[white] (1) edge[bend right=60] node[fill=white] {} coordinate  (7) (6)
			               (2) edge[bend right=50] node[fill=white] {} coordinate (8) (6)
										 (3) edge[bend right=40] node[fill=white] {}coordinate (9) (6);

      \path[shorten <=1pt] (1) edge[out=-60,in=180]node[fill=white] {$\infty$} (7)
			               (2) edge[out=-50,in=180] node[fill=white] {$\infty$}(8)
										 (3) edge[out=-40,in=180]node[fill=white] {$\infty$} (9);

      \path[->,dashed,shorten >=1pt] (7) edge[out=0,in=240] node[fill=white] {}(6)
			               (8) edge[out=0,in=230]node[fill=white] {} (6)
										 (9) edge[out=0,in=220]node[fill=white] {}  (6);
										
			\path[freccia,dashed] (4) edge[bend right,dashed] node[fill=white] {$\infty$} (6);

\end{tikzpicture}
\end{center} 
\vspace{-20pt}
\caption{The graph $M_n$ of $C(\C{\rm P}^n_q)$.}
\label{cpn}
\end{figure}
From the equivariance of the diagram~\eqref{vspull}, we conclude:
\begin{prop}\label{cppull}
The C*-algebra of the Vaksman--Soibelman quantum complex projective space 
$C(\mathbb{C}\mathrm{P}^n_q)$ has the following 
pullback structure
\begin{equation}\label{eq:4}
\begin{gathered}
\xymatrix{
&C(\mathbb{C}\mathrm{P}^n_q) \ar[ld]_{\widetilde{\pi_1^n}} \ar[rd]^{\widetilde{f^n}}\\
C(\mathbb{C}\mathrm{P}^{n-1}_q) \ar[rd]_{\iota^n}& &C(B^{2n}_q),\ar[ld]^{\pi_2^n}\\
& C(S^{2n-1}_q)}
\end{gathered}
\end{equation}
where all the maps above are analogous to the ones in the diagram \eqref{eq:pbfixed}. 
Furthermore, we obtain
\begin{equation}
 \label{eq:K0proj}
K_0(C(\mathbb{C}{\rm P}^n_q))\cong 
 K_0(C(\C{\rm P}^{n-1}_q))\oplus \partial_{10}(K_1(C(\bS^{2n-1}_q))),
\end{equation}
where $\partial_{10}$ is Milnor's connecting homomorphism.
\end{prop}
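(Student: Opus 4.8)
The plan is to obtain the pullback square \eqref{eq:4} for free from Corollary~\ref{main_fixed} and then to read off \eqref{eq:K0proj} by chasing the exact sequence \eqref{af6term}. First I would observe that the graph $L_{2n+1}$ is $v_n$-trimmable, that deleting the loop $e_n$ turns it into the ball graph $\Gamma_{2n}$ (a one-sink extension of $L_{2n-1}$), and that contracting $v_n$ turns it into $L_{2n-1}$; thus in the notation of Theorem~\ref{main} we have $E=L_{2n+1}$, $E'=\Gamma_{2n}$, $E''=L_{2n-1}$, and \eqref{vspull} is the corresponding instance of \eqref{eq:pbdiag}. Corollary~\ref{main_fixed} then applies: using that by definition $C(S^{2n+1}_q)^{U(1)}=C(\C\mathrm{P}^n_q)$ and $C(S^{2n-1}_q)^{U(1)}=C(\C\mathrm{P}^{n-1}_q)$, and that $C^*(\Gamma_{2n})=C(B^{2n}_q)$, one gets \eqref{eq:4} with $\widetilde{\pi_1^n},\widetilde{f^n},\iota^n,\pi_2^n$ the relevant instances of the maps in \eqref{eq:pbfixed}.

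For the K-theory, I would feed the algebras of \eqref{eq:4} into \eqref{af6term}. The inputs needed are $K_1(C(B^{2n}_q))=0$ and $K_0(C(B^{2n}_q))=\Z[P_{v_n}]$, which hold by \cite[Lemma~5.2]{RTW04} since $\Gamma_{2n}$ is a one-sink extension of $L_{2n-1}$, together with $K_0(C(S^{2n-1}_q))\cong\Z$ \cite{HS02}. Because $K_1(C(B^{2n}_q))=0$, the six-term sequence \eqref{af6term} collapses to the exact sequence
\[
0\longrightarrow K_1(C(S^{2n-1}_q))\xrightarrow{\ \partial_{10}\ }K_0(C(\C\mathrm{P}^n_q))\xrightarrow{\ \psi\ }K_0(C(\C\mathrm{P}^{n-1}_q))\oplus\Z[P_{v_n}]\xrightarrow{\ \phi\ }K_0(C(S^{2n-1}_q))\longrightarrow0,
\]
with $\psi=(\widetilde{\pi_1^n}_{*},\widetilde{f^n}_{*})$ and $\phi=(\pi_2^n)_*-(\iota^n)_*$. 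In particular $\partial_{10}$ is injective, so $\partial_{10}(K_1(C(S^{2n-1}_q)))$ is a copy of $K_1(C(S^{2n-1}_q))$ inside $K_0(C(\C\mathrm{P}^n_q))$.

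Next I would pin down $\operatorname{im}\psi=\ker\phi$. Since the vertex projection $P_{v_n}$ is gauge invariant, a direct inspection of the maps defined just before Theorem~\ref{main} gives $\widetilde{f^n}(P_{v_n})=P_{v_n}$ and $\widetilde{\pi_1^n}(P_{v_n})=0$, so $\psi([P_{v_n}])=(0,[P_{v_n}])$; hence $0\oplus\Z[P_{v_n}]\subseteq\ker\phi$, equivalently $(\pi_2^n)_*=0$ (as one also sees directly, $\pi_2^n$ being the quotient by the ideal generated by $P_{v_n}$), and therefore $\ker\phi=\ker(\iota^n)_*\oplus\Z[P_{v_n}]$. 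Surjectivity of $\phi$ then forces $(\iota^n)_*\colon K_0(C(\C\mathrm{P}^{n-1}_q))\to K_0(C(S^{2n-1}_q))\cong\Z$ to be onto; this splits, giving $K_0(C(\C\mathrm{P}^{n-1}_q))\cong\ker(\iota^n)_*\oplus\Z\cong\ker\phi$. Finally, $\operatorname{im}\psi=\ker\phi$ is free abelian --- here I would cite the known value $K_0(C(\C\mathrm{P}^{n-1}_q))\cong\Z^n$, or argue inductively from $\C\mathrm{P}^0_q=\mathrm{pt}$ --- so the extension $0\to\partial_{10}(K_1(C(S^{2n-1}_q)))\to K_0(C(\C\mathrm{P}^n_q))\xrightarrow{\psi}\ker\phi\to0$ splits, which is precisely \eqref{eq:K0proj}.

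The main point requiring care is this last step: identifying $\ker\phi$ with a copy of $K_0(C(\C\mathrm{P}^{n-1}_q))$ and splitting off $\partial_{10}(K_1(C(S^{2n-1}_q)))$. It rests on three elementary facts --- that $K_0(C(S^{2n-1}_q))$ and $K_0(C(B^{2n}_q))$ are both $\Z$, that $\pi_2^n$ annihilates the $K_0$-generator $[P_{v_n}]$ of $C(B^{2n}_q)$, and that $K_0(C(\C\mathrm{P}^{n-1}_q))$ is free --- after which \eqref{eq:K0proj} drops out of a routine chase of \eqref{af6term}.
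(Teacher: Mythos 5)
Your derivation of the pullback square \eqref{eq:4} from Corollary~\ref{main_fixed} is exactly the paper's, and your overall strategy for \eqref{eq:K0proj} (extract a four-term exact sequence from \eqref{af6term} and split off $\partial_{10}(K_1)$ using freeness) is also the paper's. However, the step where you identify $\ker\phi$ rests on a false claim: $[P_{v_n}]$ does \emph{not} generate $K_0(C(B^{2n}_q))$ --- it is zero. Indeed, applying \eqref{eq:CK2} at the vertex $v_{n-1}$ of $\Gamma_{2n}$ gives $P_{n-1}=S_{n-1}S_{n-1}^*+S_{n-1,n}S_{n-1,n}^*$, and since $S_{n-1}^*S_{n-1}=P_{n-1}$ and $S_{n-1,n}^*S_{n-1,n}=P_{n}$, passing to $K_0$ yields $[P_{n-1}]=[P_{n-1}]+[P_{n}]$, i.e.\ $[P_{n}]=0$ (for $n=1$ this is the familiar fact that $[1-ss^*]=0$ in $K_0(\mathcal{T})$). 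The correct statement, which the paper quotes from \cite{HS06,HS08}, is $K_0(C(B^{2n}_q))=\Z[1]$. Consequently your inclusion $0\oplus\Z[P_{v_n}]\subseteq\ker\phi$ is vacuous and does not yield $(\pi_2^n)_*=0$; in fact $(\pi_2^n)_*$ is an \emph{isomorphism}, because $\pi_2^n$ is unital and $[1]$ generates both $K_0(C(B^{2n}_q))\cong\Z$ and $K_0(C(S^{2n-1}_q))\cong\Z$ (a quotient map by an ideal need not vanish on $K_0$). Your description $\ker\phi=\ker(\iota^n)_*\oplus\Z[P_{v_n}]$ is therefore wrong; the correct kernel is the graph $\{(x,((\pi_2^n)_*)^{-1}(\iota^n)_*(x))\}$, which is still isomorphic to $K_0(C(\C\mathrm{P}^{n-1}_q))$ via the first projection, so the final formula survives once this step is repaired.

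It is worth noting that the paper's proof uses precisely the fact you negated: it deduces $\ker\widetilde{\pi_1^n}_*\subseteq\ker\widetilde{f^n}_*$ from the identity $\iota^n_*\circ\widetilde{\pi_1^n}_*=(\pi_2^n)_*\circ\widetilde{f^n}_*$ \emph{because} $(\pi_2^n)_*$ is an isomorphism, obtains surjectivity of $\widetilde{\pi_1^n}_*$ from the extension \eqref{kext} of $C(\C\mathrm{P}^{n-1}_q)$ by the compacts (which also gives $K_0(C(\C\mathrm{P}^{n}_q))\cong\Z^{n+1}$, the freeness you need for the splitting), and thereby extracts the short exact sequence $0\to K_1(C(S^{2n-1}_q))\to K_0(C(\C\mathrm{P}^{n}_q))\to K_0(C(\C\mathrm{P}^{n-1}_q))\to 0$. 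Replace your claim $(\pi_2^n)_*=0$ either by that argument or by the graph description of $\ker\phi$ above, and also correct the citation of \cite[Lemma~5.2]{RTW04}: that lemma does not give $K_0(C(B^{2n}_q))=\Z[P_{v_n}]$ here, since unlike the $\mathcal{O}_2$ example the group $K_1(C(S^{2n-1}_q))\cong\Z$ is nontrivial and its image under the index map kills the class of the sink projection.
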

\begin{proof}
The first part of the statement follows from Corrollary \ref{main_fixed}. Recall from \cite{VS91} and \cite{HS06,HS08} that, for all $n$, 
we have that $K_0(C(S^{2n-1}_q))=\Z[1]$, $K_1(C(S^{2n-1}_q))\cong\Z$, 
and that $K_0(C(B^{2n}_q))=\mathbb{Z}[1]$ and $K_1(C(B^{2n}_q))=0$.
Note also that there is a short exact sequence (e.g., see \cite{HS02})
\begin{equation}\label{kext}
0\longrightarrow \overline{I(v_n)}\cong\mathcal{K}\longrightarrow C(\C{\rm P}^n_q)\stackrel{\widetilde{\pi_1^n}}{\longrightarrow}
C(\C{\rm P}^{n-1}_q)\longrightarrow 0,
\end{equation}
where $\mathcal{K}$ is the C*-algebra of compact operators. Now, the associated six-term exact sequence and the vanishing of 
$K_1(\mathcal{K})$
imply that $K_0(C(\C{\rm P}^n_q))\cong\Z^{n+1}$ and \mbox{$K_1(C(\C{\rm P}^n_q))=0$}, and that
the map $\widetilde{\pi_1^n}_{*}:K_0(C(\C{\rm P}^n_q))\to K_0(C(\C{\rm P}^{n-1}_q))$ is surjective.

Let us consider the Mayer--Vietoris six-term exact sequence associated to the pullback 
diagram \eqref{eq:4}: 
\begin{equation}\label{6term1}
\xymatrix{
K_0(C(\mathbb{C}\mathrm{P}^n_q)) \ar[r]^{\hspace*{-14mm}\left(\widetilde{\pi_1^n}_*\,,\widetilde{f^n}_*\right)}
& K_0(C(\mathbb{C}\mathrm{P}^{n-1}_q))\oplus K_0(C(B^{2n}_q)) \ar[r]
& K_0(C(S^{2n-1}_q)) \ar[d]\\
K_1(C(S^{2n-1}_q))\ar[u]^{\partial_{10}}
& K_1(C(\mathbb{C}\mathrm{P}^{n-1}_q))\oplus K_1(C(B^{2n}_q)) \ar[l]
& K_1(C(\mathbb{C}\mathrm{P}^n_q)). \ar[l]}
\end{equation}
We are going to prove formula \eqref{eq:K0proj} by extracting from \eqref{6term1} the following split short exact sequence:
\begin{equation}\label{newexact}
0\longrightarrow K_1(C(S^{2n-1}_q))\stackrel{\partial_{10}}{\longrightarrow} K_0(C(\C{\rm P}^n_q))
\stackrel{\widetilde{\pi_1^n}_*}{\longrightarrow} K_0(C(\C{\rm P}^{n-1}_q))\longrightarrow 0\,. 
\end{equation}
We already know that $\widetilde{\pi_1^n}_{*}$ is surjective, so to prove the exactness of \eqref{newexact},
it suffices to show that the kernel of $(\widetilde{\pi_1^n}_*\,,\widetilde{f^n}_*)$ 
is the same as the kernel of 
$\widetilde{\pi_1^n}_*$. To this end, since
$$
 \ker\left(\widetilde{\pi_1^n}_*\,,\widetilde{f^n}_*\right)=\ker\widetilde{\pi_1^n}_*\cap\ker\widetilde{f^n}_*,
$$
we want to show the inclusion $\ker\widetilde{\pi_1^n}_*\subseteq\ker\widetilde{f^n}_*$.
It follows from the pullback diagram \eqref{eq:4} and the functoriality of K-theory:
$$
\ker\widetilde{\pi_1^n}_*\subseteq \ker(\iota^n_*\circ\widetilde{\pi_1^n}_*)= \ker(\widetilde{f^n}_*\circ 
(\pi_2^n)_*)=\ker(\widetilde{f^n}_*).
$$
Here the last equality holds because $(\pi_2^n)_*$ is an isomorphism. Finally, the exact sequence \eqref{newexact} 
splits by the freeness of the $\Z$-module~$K_0(C(\C{\rm P}^{n-1}_q))$.
\end{proof}

\subsubsection{Milnor's clutching construction for generators of $K_0(C(\mathbb{C}{\rm P}^n_q))$}\label{expgen}

Recall that there are $(n+1)$-many projections $P_0$, $P_1$, ..., $P_n$\,, in the graph $M_n$ whose graph algebra
is~$C(\mathbb{C}{\rm P}^n_q)$.
Therefore, since $K_0(C(\C{\rm P}^n_q))\cong\Z^{n+1}$ and  the $K_0$-group of a graph C*-algebra is generated
by its vertex projections (see \cite[Proposition~3.8 (1)]{CET12}), we infer that
\begin{equation}\label{basis}
K_0(C(\mathbb{C}{\rm P}^{n}_q))=\mathbb{Z}[P_0]\oplus\mathbb{Z}[P_1]\oplus\ldots\oplus\mathbb{Z}[P_n].
\end{equation}
%The graph representing $C(S^{2n-1}_q)$ is a subgraph of the graph representing $C(B^{2n}_q)$
%and by a slight abuse of notation we denote the Cuntz--Krieger family of the latter by $\{S,P\}$ as well.

We will now compute the explicit value of Milnor's connecting homomorphism $\partial_{10}$
on a generator of $K_1(C(S^{2n-1}_q))$.
Let us first recall that, by Proposition~\ref{prop:K_1}, the generator of $K_1(C(S^{2n-1}_q)) \cong \Z$ is 
given by the $K_1$-class of the unitary
$$
U=S_{n-1}+(1-P_{n-1}).
$$
Next, using the pullback structure of $C(\mathbb{C}{\rm P}^n_q)$, we  follow  \cite[Section~2.1]{DHHMW12}, 
that is we find $C,D\in C(B^{2n}_q)$ 
such that $\pi_2(C)=U$ and $\pi_2(D)=U^*$:
$$
C=S_{n-1}+S_{n-1,n}+(1-P_{n-1}-P_n),
$$
$$
D=C^*=S^*_{n-1}+S^*_{n-1,n}+(1-P_{n-1}-P_n),
$$
and compute the following $2$ by $2$ matrix with entries in $C(\mathbb{C}{\rm P}^{n}_q)$:
\begin{equation}\label{idem}
p_{U}=
\begin{bmatrix}
    (1,C(2-DC)D)      & (0,C(2-DC)(1-DC)) \\
    (0,(1-DC)D)       & (0,(1-DC)^2)
\end{bmatrix}
=
\begin{bmatrix}
    (1,1-P_n)      & (0,0) \\
    (0,0)       & (0,0)
\end{bmatrix}.
\end{equation}
By \cite[Theorem~2.2]{DHHMW12}, the element $\partial_{10}([U])=[p_{U}]-[1]$ 
is a generator of $K_0(C(\mathbb{C}{\rm P}^{n}_q))$.
Observe that in the above formula for $p_{U}$ we can remove all the entries except the top left one without changing its
class in $K_0(C(\mathbb{C}{\rm P}^{n-1}_q))$, namely
$[1]-[p_{U}]=[1]-[p]$,
where $p:=(1,1-P_n)=(1,1)-(0,P_n)$
is a projection in $C(\mathbb{C}{\rm P}^{n}_q)$. Furthermore, since $p$ and $1-p$ are orthogonal, we
have $[p]+[1-p]=[1]$, so
\begin{equation}\label{milnor}
-\partial_{10}([U])=[1]-[p]=[1-p]=[(0,P_n)]=[P_n].
\end{equation}
Here the rightmost projection $P_n$ is viewed as the vertex projection corresponding to the vertex $v_n$ in the graph of 
$C(\mathbb{C}{\rm P}^n_q)$, and the rightmost equality follows from the fact that the isomorphism
from $C(\mathbb{C}{\rm P}^n_q)$ to the pullback C*-algebra of the digaram \eqref{eq:4} maps $P_n$ to $(0,P_n)$. 

Finally, let us observe that the Milnor's idempotent in the above calculation is 
$$
p_U\cong 1-P_n=P_0+P_1+\ldots+P_{n-1}\,.
$$
Hence the projective module it defines can be understood as the section module of
a noncommutative vector bundle obtained by the Milnor clutching construction.

\subsection{Quantum lens spaces  and quantum teardrops}

Quantum lens spaces, both weighted and unweighted, have been the subject of increasing interest in the last years. Their 
realisation as graph C*-algebras has been first proven in \cite{HS03}, and then further generalised in \cite{BS16} under less stringent 
assumptions. In the rest of the paper, we focus on the three-dimensional quantum lens spaces  
$L^3_q(l;1,l)$. 
%It was shown in \cite[Theorem 3.3]{BF12} that their coordinate algebras $\mathcal{O}(L_q^3(l; 1,l))$ admit a structure of
%principal $\mathcal{O}(U(1))$-comodule algebras over the coordinate algebras $\mathcal{O}(WP^1_q(1,l))$ of quantum teardrops. 
%This result was later extended to their C*-algebraic completions in \cite{AKL16} using the language of Cuntz--Pimsner algebras, 
%resulting in an algorithmic computation of the K-theory and K-homology groups of large class of three dimensional lens spaces. 
In~\cite{DL14},
 generators for the K-theory and K-homology of multi-dimensional quantum weighted projective spaces were constructed, leading 
to an extension of the K-theoretic computations for quantum 
weighted lens spaces in \cite{A18} in the Cuntz--Pimsner picture \cite{ADL16}.

\subsubsection{Lens spaces.}

Our starting point is the  C*-algebra $C(L^3_q(l;1,l))$ of the quantum lens space $L^3_q(l;1,l)$.
As explained in 
\cite[Example 2.1]{BS16}, $C(L^3_q(l;1,l))$ can be viewed as the graph C*-algebra of the
graph $L^3_l$ (see Figure \ref{fig:lens_3l}) with
\vspace*{-2mm}
\begin{list}{{\tiny\raisebox{1pt}{$\blacksquare\;$}}}{\leftmargin=2em \itemsep=2pt}
\item $l+1$ vertices $\{v^0_0,v^1_0,\ldots,v^1_{l-1}\}$,
\item one edge $e^{01}_{i}$ from $v^0_0$ to $v^1_i$ for all $0\leq i\leq l-1$,
\item a loop $e^0_0$ over the vertex $v^0_0$ and one loop $e^1_i$ over each vertex $v^1_i$ 
for all $0\leq i\leq l-1$.
\end{list}
Observe that $C^*(L^3_1)\cong C(S^3_q)$ (e.g., see~\cite{HS02}).
Throughout this subsection, we denote the Cuntz--Krieger $L^3_l$-family by $\{S,P\}$. 
To simplify notation we set 
$S_{e^{01}_j}:=S^{01}_j$, $S_{e^i_j}:=S^i_j$ and $P_{v^i_j}:=P^i_j$, 
where $0\leq i \leq1$ and $0\leq j\leq l-1$.

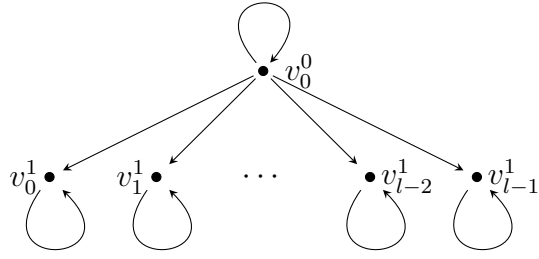
\begin{figure}[h]
\begin{center}
\begin{tikzpicture}[>=stealth,node distance=40pt,main node/.style={circle,inner sep=2pt},
freccia/.style={->,shorten >=1pt,shorten <=1pt},
ciclo/.style={out=130, in=50, loop, distance=40pt, ->},
ciclo2/.style={out=235, in=315, loop, distance=40pt, ->}]

      \node[main node] (1) {};
      \node (2) [below of=1] {$\cdots$};
      \node (3) [left of=2]{};
      \node (4) [left of=3]{};
      \node (5) [right of=2]{};
     \node (6) [right of=5] {};

      \filldraw (1) circle (0.06) node[right] {$\ v_0^0$};
      \filldraw (4) circle (0.06) node[left] {$v_0^1$};
      \filldraw (3) circle (0.06) node[left] {$v_1^1$};
      \filldraw (5) circle (0.06) node[right] {$v_{l-2}^1$};
      \filldraw (6) circle (0.06) node[right] {$v_{l-1}^1$};

      \path[freccia] (1) edge[ciclo] (1);
			\path[freccia] (4) edge[ciclo2] (4);
			\path[freccia] (3) edge[ciclo2] (3);
			\path[freccia] (5) edge[ciclo2] (5);
			\path[freccia] (6) edge[ciclo2] (6);

      \path[freccia] (1) edge (4);         							
\path[freccia] (1) edge (3)	; 
\path[freccia] (1) edge (5)	;           
\path[freccia] (1) edge (6)	;

\end{tikzpicture}
\end{center}
\vspace{-20pt}
\caption{The graph  $L^3_l$.}
\label{fig:lens_3l}
\end{figure} 
Clearly, the graph $L^3_l$ is $v^1_j$-trimmable for $0\leq j\leq l-1$. Let us choose the vertex $v^1_{l-1}$, and construct a new 
graph by removing the loop $e^1_{l-1}$. We denote the thus obtained graph by $Q_l$ (see Figure~\ref{fig:Ql}). 
Note that the graph $Q_l$ and its C*-algebra do not depend on our 
choice of a vertex.

\begin{figure}[h]
\begin{center}
\begin{tikzpicture}[>=stealth,node distance=40pt,main node/.style={circle,inner sep=2pt},
freccia/.style={->,shorten >=1pt,shorten <=1pt},
ciclo/.style={out=130, in=50, loop, distance=40pt, ->},
ciclo2/.style={out=235, in=315, loop, distance=40pt, ->}]

      \node[main node] (1) {};
      \node (2) [below of=1] {$\cdots$};
      \node (3) [left of=2]{};
      \node (4) [left of=3]{};
      \node (5) [right of=2]{};
     \node (6) [right of=5] {};

      \filldraw (1) circle (0.06) node[right] {$\ v_0^0$};
      \filldraw (4) circle (0.06) node[left] {$v_0^1$};
      \filldraw (3) circle (0.06) node[left] {$v_1^1$};
      \filldraw (5) circle (0.06) node[right] {$v_{l-2}^1$};
      \filldraw (6) circle (0.06) node[right] {$v_{l-1}^1$};

			\path[freccia] (1) edge[ciclo] (1);
			\path[freccia] (4) edge[ciclo2] (4);
			\path[freccia] (3) edge[ciclo2] (3);
			\path[freccia] (5) edge[ciclo2] (5);
			%\path[freccia] (6) edge[ciclo2] (6);

      \path[freccia] (1) edge (4);         							
\path[freccia] (1) edge (3)	; 
\path[freccia] (1) edge (5)	;           
\path[freccia] (1) edge (6)	;

\end{tikzpicture}
\end{center}
\vspace{-20pt}
\caption{The graph $Q_l$.}
\label{fig:Ql}
\end{figure}
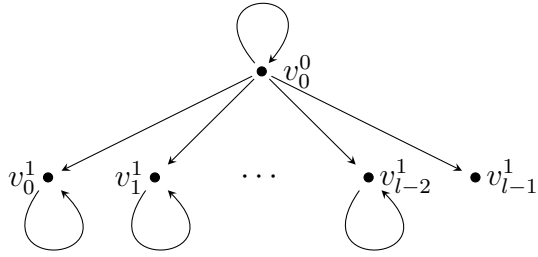

%The graph $Q_l$ is a one-sink extension of the graph $L^3_{l-1}$ of the quantum lens space $L^3_q(l-1;1,l-1)$. We can compute its K-theory using \cite[Lemma 5.2]{RTW04} and obtain
%\[K_0(C^*(Q_l))= \coker((A^t-1)\oplus W)= \Z^{l+1}, \qquad K_1(C^*(Q_l))= \ker ((A^t-1)\oplus W) = \Z^{l}.\] 

By Theorem \ref{main}, we obtain the following $U(1)$-equivariant pullback structure of the algebra 
$C(L^3_q (l;1,l))\cong C^*(L^3_l)$:
\begin{equation}\label{lenspull}
\begin{gathered}
\xymatrix{
& C^*(L^3_l) \ar[ld] \ar[rd] \\
 C^*(L^3_{l-1}) \ar[rd]
& 
& C^*(Q_l)\otimes C(S^1). \ar[ld]\\
& C^*(L^3_{l-1})\otimes C(S^1)
}
\end{gathered}
\end{equation}
Here all the maps are analogous to the ones used in Theorem~\ref{main}.

\subsubsection{Teardrops.}\label{teardrops}

Recall that the C*-algebra  $C(\mathbb{W}{\rm P}^1_q(1,l))$ of the weighted projective space $\mathbb{W}{\rm P}^1_q(1,l)$ 
\cite{BF12} is defined as the $U(1)$-fixed-point subalgebra of $C(L^3_q(l;1,l))$. 
It~can be viewed as the graph C*-algebra of the graph $W_n$
 consisting of the same vertices \mbox{$v^0_0$, ..., $v^1_{l-1}$} as in the graph $L^{3}_l$ (vertex projections are gauge 
 invariant), with no loops, and countably many edges between $v^0_0$ and all the other vertices~\cite{BS16}. 
 (See Figure~\ref{fig:wl}.)%, where thick arrows denote infinitely many edges.)

\begin{figure}[h]
\begin{center}
\begin{tikzpicture}[>=stealth,node distance=40pt,main node/.style={circle,inner sep=2pt},
freccia/.style={->,shorten >=1pt,shorten <=1pt},
ciclo/.style={out=130, in=50, loop, distance=40pt, ->},
ciclo2/.style={out=235, in=315, loop, distance=40pt, ->}]

      \node[main node] (1) {};
      \node (2) [below of=1] {$\cdots$};
      \node (3) [left of=2]{};
      \node (4) [left of=3]{};
      \node (5) [right of=2]{};
     \node (6) [right of=5] {};

      \filldraw (1) circle (0.06) node[right] {$\ v_0^0$};
      \filldraw (4) circle (0.06) node[left] {$v_0^1$};
      \filldraw (3) circle (0.06) node[left] {$v_1^1$};
      \filldraw (5) circle (0.06) node[right] {$v_{l-2}^1$};
      \filldraw (6) circle (0.06) node[right] {$v_{l-1}^1$};

			%\path[freccia] (1) edge[ciclo] (1);
			%\path[freccia] (4) edge[ciclo2] (4);
			%\path[freccia] (3) edge[ciclo2] (3);
			%\path[freccia] (5) edge[ciclo2] (5);
			%\path[freccia] (6) edge[ciclo2] (6);

\path[->,shorten >=1pt,shorten <=1pt] (1) edge node[fill=white] {$\infty$} (3);

\path[->,shorten >=1pt,shorten <=1pt] (1) edge node[fill=white] {$\infty$} (4);

\path[->,shorten >=1pt,shorten <=1pt] (1) edge node[fill=white] {$\infty$} (5);

\path[->,shorten >=1pt,shorten <=1pt] (1) edge node[fill=white] {$\infty$} (6);

\end{tikzpicture}
\end{center}
\vspace{-20pt}
\caption{The graph $W_l$ of $C(\mathbb{W}{\rm P}^1_q(1,l))$.}
\label{fig:wl}
\end{figure}
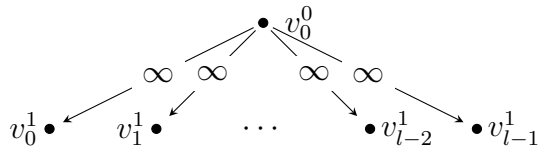

As discussed in Section \ref{sec:equiv}, from \eqref{lenspull} we obtain another pullback diagram of fixed-point 
subalgebras:
\begin{equation}\label{wppull}
\begin{gathered}
\xymatrix{
& C(\mathbb{W}{\rm P}^1_q(1,l)) \ar[ld] \ar[rd] \\
 C(\mathbb{W}{\rm P}^1_q(1,l-1)) \ar[rd]
& 
& C^*(Q_l). \ar[ld]_{\chi_1}\\
& C^*(L^3_{l-1})
}
\end{gathered}
\end{equation}
Here $\chi_1$ is the quotient map by the ideal $\overline{I(v^1_{l-1})}$. 

Let us introduce two more maps needed for Lemma~\ref{qlpb} below.
Define $\chi_2:C^*(Q_l)\to C^*(Q_l/H)$ to be the quotient map given by the ideal generated by the saturated hereditary 
subset $H=\{v^1_0, v^1_1, \ldots, v_{l-2}^1\}$, and $g:C^*(L^3_{l-1})\to C^*(L^3_l/H)$ the quotient map given by the ideal 
generated by $H$ 
considered as a subset in $(L^3_{l-1})_0$.
\begin{lemma}\label{qlpb}
The following diagram of the above-defined $U(1)$-equivariant *-ho\-mo\-mor\-phisms
\begin{equation}\label{qlpull}
\begin{gathered}
\xymatrix{
& C^*(Q_l) \ar[ld]_{\chi_1} \ar[rd]^{\chi_2} \\
C^*(L^3_{l-1}) \ar[rd]_g
& 
& \mathcal{T} \ar[ld]^{\sigma}\\
& C(S^1)
}
\end{gathered}
\end{equation}
is a pullback diagram. Here $\mathcal{T}$ is the Toeplitz algebra and $\sigma:\mathcal{T}\to C(S^1)$ is the symbol 
map. 
 \end{lemma}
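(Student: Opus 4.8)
The plan is to follow the template of the proof of Theorem~\ref{main}. First record the quotient‑graph identifications: $Q_l/\{v^1_{l-1}\}=L^3_{l-1}$ (we remove the sink $v^1_{l-1}$ together with the only edge $e^{01}_{l-1}$ ending in it), $Q_l/H$ is the standard Toeplitz graph on $v^0_0$ (with loop $e^0_0$) and the sink $v^1_{l-1}$ (with edge $e^{01}_{l-1}$), so by \eqref{quotient} $C^*(Q_l/H)\cong\mathcal T$, and $L^3_{l-1}/H$ is the single‑loop graph, so $C^*(L^3_{l-1}/H)\cong C(S^1)$. Under these identifications $\chi_1,\chi_2,g$ are the quotient maps by the gauge‑invariant ideals $\overline{I_{Q_l}(\{v^1_{l-1}\})}$, $\overline{I_{Q_l}(H)}$ and $\overline{I_{L^3_{l-1}}(H)}$, while $\sigma$ is the symbol map $\mathcal T\to C(S^1)$, which in the graph picture is the quotient by $\overline{I_{Q_l/H}(\{v^1_{l-1}\})}$. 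Commutativity of \eqref{qlpull} is then checked on the generators $\{P,S\}$ of $C^*(Q_l)$: any vertex or edge of $Q_l$ lying over $H$ is killed by both $g\circ\chi_1$ and $\sigma\circ\chi_2$; the vertex $v^1_{l-1}$ and the edge $e^{01}_{l-1}$ go to $0$ on both sides (removed on the $\chi_1$ side, annihilated by the symbol map on the $\chi_2$ side); and $P_{v^0_0}\mapsto 1$, $S_{e^0_0}\mapsto u$ on both sides. This yields a $*$-homomorphism $F\colon C^*(Q_l)\to C^*(L^3_{l-1})\times_{C(S^1)}\mathcal T$, $a\mapsto(\chi_1(a),\chi_2(a))$, into the pullback.

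For injectivity of $F$ one cannot argue, as in Theorem~\ref{main}, that a single leg is injective, since both $\chi_1$ and $\chi_2$ have nontrivial kernels; instead we apply the gauge‑invariant uniqueness theorem (Theorem~\ref{gut}) to $F$ itself. The pullback $P:=C^*(L^3_{l-1})\times_{C(S^1)}\mathcal T$ carries the diagonal $U(1)$-action coming from the gauge action on $C^*(L^3_{l-1})$ and the symbol‑compatible gauge action on $\mathcal T$; this is well defined on $P$ because $g$ and $\sigma$ are $U(1)$-equivariant, and $F$ is $U(1)$-equivariant because $\chi_1$ and $\chi_2$ are. Moreover $F(P_v)=(\chi_1(P_v),\chi_2(P_v))\neq 0$ for every vertex $v$ of $Q_l$: the vertices $v^0_0$ and $v^1_i$ with $i\le l-2$ survive in $L^3_{l-1}$, so $\chi_1(P_v)\neq 0$, while $v^1_{l-1}$ survives in $Q_l/H$, so $\chi_2(P_{v^1_{l-1}})\neq 0$ in $\mathcal T$. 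Hence Theorem~\ref{gut} gives that $F$ is injective. (Equivalently, since $\{v^1_{l-1}\}\cap H=\emptyset$, the intersection $\ker\chi_1\cap\ker\chi_2$ is the gauge‑invariant ideal attached to the empty set, i.e.\ $0$.)

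Surjectivity of $F$ is obtained from Pedersen's criterion (\cite[Proposition~3.1]{P99}, exactly as in Theorem~\ref{main}): since the leg $\chi_1$ and the map $\sigma$ are surjective, it suffices to verify the kernel inclusion $\ker\sigma\subseteq\chi_2\big(\ker\chi_1\big)$. Now $\ker\chi_1=\overline{I_{Q_l}(\{v^1_{l-1}\})}$ contains the sink projection $P_{v^1_{l-1}}$, and since $v^1_{l-1}\notin H$ we have $\chi_2(P_{v^1_{l-1}})=P_{v^1_{l-1}}$ in $C^*(Q_l/H)\cong\mathcal T$; there $P_{v^1_{l-1}}$ equals $1-ss^*$, the vacuum projection, and the closed ideal it generates is precisely $\mathcal K=\ker\sigma=\overline{I_{Q_l/H}(\{v^1_{l-1}\})}$. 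As $\chi_2$ is surjective, $\chi_2(\ker\chi_1)$ is a closed ideal of $\mathcal T$ (the image of a $C^*$-algebra under a $*$-homomorphism), and it contains $P_{v^1_{l-1}}$, hence contains the whole ideal generated by $P_{v^1_{l-1}}$, namely $\ker\sigma$. This establishes the inclusion, so $F$ is onto, and therefore an isomorphism onto the pullback.

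The only genuinely delicate point is the kernel inclusion in the last step: one must recognise $\ker\sigma$ as the gauge‑invariant ideal of the Toeplitz graph generated by the sink projection, and observe that this very generator already lies in $\ker\chi_1$ and is fixed by $\chi_2$. Everything else is the formal pullback bookkeeping of Theorem~\ref{main}; the single conceptual departure is that injectivity of $F$ must come from the gauge‑invariant uniqueness theorem applied to $F$ directly, rather than from injectivity of one of the two legs.
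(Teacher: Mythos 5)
Your proof is correct and follows the same overall strategy as the paper: Pedersen's criterion \cite[Proposition~3.1]{P99} plus the two conditions $\ker\chi_1\cap\ker\chi_2=\{0\}$ and $\ker\sigma\subseteq\chi_2(\ker\chi_1)$. (The paper's text states this last inclusion in the reversed, automatic, direction, but what it actually argues --- and what is needed, by analogy with the use of the criterion in Theorem~\ref{main} --- is the direction you prove.) The differences are local. For the first condition the paper writes a generic element of $\ker\chi_1\cdot\ker\chi_2$ as $S_\alpha S_\beta^*S_\gamma S_\delta^*$ with $r(\alpha)=r(\beta)=v^1_{l-1}$ and $r(\gamma)=r(\delta)\in H$ and disposes of it by inspecting the possible paths; your two alternatives --- applying the gauge-invariant uniqueness theorem directly to $F$ (legitimate, since $F$ is equivariant for the diagonal action on the pullback and $F(P_v)\neq 0$ for every vertex), or invoking the lattice isomorphism between saturated hereditary subsets and gauge-invariant ideals to get $\ker\chi_1\cap\ker\chi_2=\overline{I_{Q_l}\big(\{v^1_{l-1}\}\cap H\big)}=0$ --- are both valid and avoid the case analysis. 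For the second condition the paper checks that every spanning element $S_\alpha S_\beta^*$ of $I_{Q_l/H}(v^1_{l-1})$ lifts to $I_{Q_l}(v^1_{l-1})$; your observation that the closed ideal $\chi_2(\ker\chi_1)$ already contains the generator $P_{v^1_{l-1}}=1-ss^*$ of $\ker\sigma\cong\mathcal{K}$ reaches the same inclusion a bit more cleanly. No gaps.
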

\begin{proof}
Recall that the Toeplitz algebra $\mathcal{T}$ is isomorphic to the graph C*-algebra 
corresponding to the quotient graph $Q_l/H$ and the isomorphism is given by 
$s\mapsto S^0_0+S^{01}_{l-1}$ (e.g., see \cite{HS08}), where $s$ is the isometry generating $\mathcal{T}$.

By \cite[Proposition~3.1]{P99} and the surjectivity of $g$ and $\sigma$, it suffices to show that
$\ker\chi_1\cap\ker\chi_2=\{0\}$ and that $\chi_2(\ker\chi_1)\subseteq\ker\sigma$. To prove the first condition,
recall that, since $\ker\chi_1$ and $\ker\chi_2$ are closed ideals in a~C*-algebra, we have that
$\ker\chi_1\cap\ker\chi_2=\ker\chi_1\ker\chi_2$.
Next, $\{v^1_{l-1}\}$ and $H$ are saturated hereditary subsets of $(Q_l)_0$, so
$$
\ker\chi_1=\overline{I_{Q_l}(v_{l-1}^1)}
\qquad\text{and}\qquad
\ker\chi_2=\overline{I_{Q_l}(H)}.
$$
Using (\ref{herideal}), one can observe that an arbitrary element of $\ker\chi_1\ker\chi_2$ is of the form 
$S_\alpha S_\beta^*S_\gamma S_\delta^*$, where $\alpha, \beta\in \text{Path}(Q_l)$ with $r(\alpha)=r(\beta)=v_{l-1}^1$
and $\gamma,\delta\in \text{Path}(Q_l)$ with $r(\gamma)=r(\delta)\in H$. The claim follows from the analysis of all
possible paths satisfying the above conditions.

To prove the second condition, notice that $\ker\sigma=\overline{I_{Q_l/H}(v_{l-1}^1)}$.
Any element of $I_{Q_l/H}(v_{l-1}^1)$ is an element of $I_{Q_l}(v_{l-1}^1)$,
and $\chi_2(S_\alpha)=S_\alpha$ for all $\alpha\in{\rm Path}(Q_l/H)$. Hence
$\chi_2(I_{Q_l}(v_{l-1}^1))\subseteq I_{Q_l/H}(v^1_{l-1})$.
Furthermore, since $\chi_2$ is a *-homomorphism, we can argue as in the proof of Theorem \ref{main} to conclude that 
$\chi_2(\ker\chi_1)\subseteq\ker\sigma$. 
\end{proof}
\begin{rem}
{\em 
Note that for $l=2$, we get the graph $Q_2$ considered in Section~\ref{toe}. By~Lemma~\ref{qlpb},
the C*-algebra $C^*(Q_2)$ has the following $U(1)$-equivariant pullback structure:
\begin{equation}
\begin{gathered}
\xymatrix{
& C^*(Q_l) \ar[ld] \ar[rd] \\
C(S^3_q) \ar[rd]
& 
& \mathcal{T} .\ar[ld]\\
& C(S^1)
}
\end{gathered}
\end{equation}
The  equivariance of the above diagram allows it to descend to the fixed-point subalgebras:
\begin{equation}
\begin{gathered}
\xymatrix{
& C^*(Q_l)^{U(1)} \ar[ld]\ar[rd] \\
C(\C{\rm P}^1_q) \ar[rd]
& 
& \mathcal{T}^{U(1)}. \ar[ld]\\
& \C
}
\end{gathered}
\end{equation}
The Mayer--Vietoris six-term exact sequence in K-theory associated to this diagram gives the K-groups of $C^*(Q_2)^{U(1)}$
as in Proposition~\ref{toepro}.}\hfill$\lozenge$
\end{rem}
Next, using Lemma~\ref{qlpb} along with \cite[Proposition~2.7]{P99}
and the analogous reasoning as in the proof of Lemma~\ref{cppull}, 
we arrive at:
\begin{prop}
The C*-algebra  $C(\mathbb{W}\mathrm{P}^1_q(1,l))$ 
has the following pullback structure
\begin{equation}\label{wp2pull}
\begin{gathered}
\xymatrix{
& C(\mathbb{W}{\rm P}^1_q(1,l)) \ar[ld] \ar[rd] \\
 C(\mathbb{W}{\rm P}^1_q(1,l-1)) \ar[rd]
& 
& \mathcal{T}. \ar[ld]\\
& C(S^1)
}
\end{gathered}
\end{equation}
Furthermore, we obtain
\begin{equation}\label{K0wproj}
K_0(C(\mathbb{W}{\rm P}^1_q(1,l))) \cong K_0(C(\mathbb{W}{\rm P}^1_q(1,l-1)))\oplus \partial_{10}
(K_1(C(\bS^{1}))),
\end{equation}
where $\partial_{10}$ is Milnor's connecting homomorphism.
\end{prop}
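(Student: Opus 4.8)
The plan is to produce the pullback square~\eqref{wp2pull} by \emph{pasting} the pullback square of Lemma~\ref{qlpb} onto the upper-right corner of the pullback square~\eqref{wppull}. Observe first that \eqref{wppull} is nothing but the diagram of Corollary~\ref{main_fixed} specialised to $E=L^3_l$, $E'=Q_l$, $E''=L^3_{l-1}$; in particular its right leg $C^*(Q_l)\to C^*(L^3_{l-1})$ is the quotient map $\chi_1$, and its bottom-left arrow is the subalgebra inclusion $\iota\colon C(\mathbb{W}{\rm P}^1_q(1,l-1))=C^*(L^3_{l-1})^{U(1)}\hookrightarrow C^*(L^3_{l-1})$. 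On the other hand, by Lemma~\ref{qlpb} the very same map $\chi_1$ is one of the two legs realising $C^*(Q_l)$ as the pullback of $C^*(L^3_{l-1})$ and $\mathcal{T}$ over $C(\bS^1)$. Therefore the pasting law for pullbacks of C*-algebras~\cite[Proposition~2.7]{P99} applies and identifies $C(\mathbb{W}{\rm P}^1_q(1,l))$ with the pullback of $C(\mathbb{W}{\rm P}^1_q(1,l-1))$ and $\mathcal{T}$ over $C(\bS^1)$, the two resulting legs being $g\circ\iota$ and the symbol map $\sigma$. This is precisely~\eqref{wp2pull}.

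For the isomorphism~\eqref{K0wproj} I would run the Mayer--Vietoris six-term exact sequence of \eqref{wp2pull} and then argue exactly as in the proof of Proposition~\ref{cppull}. The relevant K-theoretic data are $K_0(\mathcal{T})=\Z[1]$, $K_1(\mathcal{T})=0$, $K_0(C(\bS^1))=\Z[1]$, $K_1(C(\bS^1))\cong\Z$, together with the fact that $C(\mathbb{W}{\rm P}^1_q(1,l))$ and $C(\mathbb{W}{\rm P}^1_q(1,l-1))$, being graph C*-algebras of the acyclic graphs $W_l$ and $W_{l-1}$, are AF-algebras; hence their odd K-groups vanish, and a short induction along the extensions $0\to\mathcal{K}\to C(\mathbb{W}{\rm P}^1_q(1,l))\to C(\mathbb{W}{\rm P}^1_q(1,l-1))\to 0$ (base case $C(\mathbb{C}{\rm P}^1_q)$) shows that their even K-groups are free abelian. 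As $\sigma$ is unital, $\sigma_*\colon\Z[1]=K_0(\mathcal{T})\to K_0(C(\bS^1))=\Z[1]$ is an isomorphism; this makes the horizontal map $K_0(C(\mathbb{W}{\rm P}^1_q(1,l-1)))\oplus K_0(\mathcal{T})\to K_0(C(\bS^1))$ surjective, forces the Bott connecting map to vanish, and --- via commutativity of \eqref{wp2pull} and injectivity of $\sigma_*$ --- shows that the kernel of the left leg $p_*$ equals the kernel of the pair of legs, i.e.\ equals $\partial_{10}(K_1(C(\bS^1)))$, with $\partial_{10}$ injective and $p_*$ surjective. Reading off the split short exact sequence
\[
0\longrightarrow K_1(C(\bS^1))\stackrel{\partial_{10}}{\longrightarrow}K_0(C(\mathbb{W}{\rm P}^1_q(1,l)))\stackrel{p_*}{\longrightarrow}K_0(C(\mathbb{W}{\rm P}^1_q(1,l-1)))\longrightarrow 0
\]
and splitting it by freeness of $K_0(C(\mathbb{W}{\rm P}^1_q(1,l-1)))$ then yields~\eqref{K0wproj}.

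The hardest part is the bookkeeping in the first step: one has to be sure that the legs of the two squares genuinely match --- that the map $C^*(Q_l)\to C^*(L^3_{l-1})$ occurring in \eqref{wppull} is exactly $\chi_1$ of Lemma~\ref{qlpb}, and that the bottom-left arrow of \eqref{wppull} is the inclusion $\iota$ --- so that \cite[Proposition~2.7]{P99} is genuinely applicable and the pasted square has the claimed legs $g\circ\iota$ and $\sigma$. Once this is in place, the rest is a direct rerun of the computation behind Proposition~\ref{cppull}, the only auxiliary observation being that the pertinent $K_0$-groups are free abelian so that the above sequence splits.
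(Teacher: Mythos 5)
Your proposal is correct and follows essentially the same route as the paper: the paper likewise obtains \eqref{wp2pull} by pasting the pullback square of Lemma~\ref{qlpb} onto the square \eqref{wppull} using the pasting law \cite[Proposition~2.7]{P99}, and then derives \eqref{K0wproj} by rerunning the Mayer--Vietoris argument of Proposition~\ref{cppull} with $\sigma_*$ playing the role of $(\pi_2^n)_*$. Your careful matching of the legs ($\chi_1$ and the inclusion $\iota$) and the freeness observation needed for the splitting are exactly the points the paper leaves implicit.
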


\subsubsection{Milnor's clutching construction for generators of $K_0(C(\mathbb{W}{\rm P}^1_q(1,l)))$}
Recall that there are $(l+1)$-many projections $P^0_0$, $P^1_0$, ..., $P^1_{l-1}$\,, 
in the graph $W_n$ whose graph algebra is~$C(\mathbb{W}{\rm P}^1(1,l))$.
Therefore, since $K_0(C(\mathbb{W}{\rm P}^1(1,l)))\cong\Z^{l+1}$ and  the $K_0$-group of a graph C*-algebra is generated
by its vertex projections (see \cite[Proposition~3.8 (1)]{CET12}), we infer that
\begin{equation}\label{basis:wp}
K_0(C(\mathbb{W}{\rm P}^1(1,l)))=\mathbb{Z}[P^0_0]\oplus\mathbb{Z}[P^1_0]\oplus\ldots\oplus\mathbb{Z}[P^1_{l-1}].
\end{equation}

Below we compute the value of Milnor's connecting homomorphism on the generator $[u]\in K_1(C(S^1))$, where
$u$ is the standard generator of~$C(S^1)$. This computation
closely follows an analogous computation in Section~\ref{expgen}.
First, we find $c,d\in \mathcal{T}$ 
such that $\sigma(c)=u$ and $\sigma(d)=u^*$:
$$c=S^0_0+S^1_{l-1},\quad d=c^*=\left(S^0_0\right)^*+(S^1_{l-1})^*,$$ 
and, using the formula \eqref{idem}, we compute the following $2$ by $2$ matrix with entries in $C(\mathbb{W}{\rm P}^1_q(1,l))$:
$$
p_u=
\begin{bmatrix}
    (1,P_0^0)      & (0,0) \\
    (0,0)       & (0,0)
\end{bmatrix}.
$$
The element $\partial_{10}([u])=[p_u]-[1]$ is a generator of $K_0(C(\mathbb{W}{\rm P}^1_q(1,l)))$. We have that
$[1]-[p_u]=[1]-[p]$, where $p:=(1,P^0_0)=(1,1)-(0,P^1_{l-1})$
is a projection in $C(\mathbb{W}{\rm P}^{1}_q(1,l))$. Notice that
$$
-\partial_{10}([u])=[1]-[p]=[(0,P_{l-1}^1)]=[P_{l-1}^1],
$$
where $P_{l-1}^1$ is viewed as an element of $C(\mathbb{W}{\rm P}^{1}_q(1,l))$.

Finally, as we did in Section~\ref{expgen}, let us observe that the Milnor's idempotent in the above calculation is 
$$
p_u\cong 1-P^1_{l-1}=P^0_0+P^1_0+\ldots+P^1_{l-2}\,.
$$

\subsection*{Acknowledgements}\noindent
This work is part of the project \emph{Quantum Dynamics} partially supported by
the EU-grant H2020-MSCA-RISE-2015-691246 and by the Polish Government grants
3542/H2020/2016/2 and \mbox{328941/PnH/2016}. It was initiated while F.D., P.M.H., 
and M.T. were visiting Penn State 
University, and they  are very grateful to the University for  excellent 
working conditions and financial support.
P.M.H. and M.T. are also very thankful to the University of Copenhagen for its financial support and amazing
hospitality.
%, and to Wojciech Szyma\'nski for enlightening discussions. 
Furthermore, F.A., P.M.H., and M.T. are happy to
acknowledge a substantial logistic support of the Max 
Planck Insitute for Mathematics in the Science in Leipzig, where much of joint research was carried out. 
Most importantly,
our deepest gratitude goes to the graph C*-algebra gurus S\o ren Eilers and Wojciech Szyma\'nski 
for their key technical support.
Finally, F.A. would also like to thank Magnus Goffeng and Bram Mesland for valuable conversations 
about Cuntz--Krieger algebras.


\begin{thebibliography}{999}

%\bibitem{AASM17}
%G.~Abrams, P.~Ara, M.~Siles Molina.
%{\em Leavitt path algebras.} 
%Lecture Notes in Mathematics, 2191. Springer, London, 2017. 

%\bibitem{A62}
%J.~F.~Adams,
%Vector Fields on Spheres,
%{\em Ann. of Math.}, {\bf 75} (1962), 603--632.

\bibitem{aHR97}
A.~an Huef, I.~Raeburn,
The ideal structure of Cuntz--Krieger algebras,
{\em Ergodic Theory Dynam. Sys.}, {\bf 17} (1997), 611--624.

\bibitem{A18}
F. Arici, Gysin Exact Sequences for Quantum Weighted Lens Spaces. In: Wood D., de Gier J., Praeger C., Tao T. (eds) 2016 MATRIX Annals. MATRIX Book Series, vol 1. Springer.

%\bibitem{ABL15}
%F.~Arici, S.~Brain, G.~Landi,
%The Gysin sequence for quantum lens spaces,
%{\em J. Noncommut. Geom.}, {\bf 9} (2015), 1077--1111.

\bibitem{ADL16}
F. Arici, F. D'Andrea, G. Landi, Pimsner algebras and noncommutative circle bundles, In: D. Alpay, F. Cipriani, F.
Colombo, D. Guido, I. Sabadini and J.-L. Sauvageot, editors, Noncommutative Analysis, Operator Theory
and Applications, Springer (2016).

%\bibitem{AKL16}
%F.~Arici, J.~Kaad, G.~Landi, Pimsner algebras and Gysin sequences from principal circle actions.
%{\em J. Noncommut. Geom.}, {\bf 10} (2016), 29--74.

%\bibitem{AT60}
%M.~F.~Atiyah, J.~A.~Todd, 
%On complex Stiefel manifolds,
%{\em Mathematical Proceedings of the Cambridge Philosophical Society,} {\bf 56} (1960), 4, 342--353.

\bibitem{BPRS}
T. Bates, D. Pask, I. Raeburn, W. Szyma\'nski, {The C*-algebras of row finite graphs},
\emph{New York J.~Math.}, {\bf 6} (2000), 307--324.

\bibitem{BHMS05}
P.~F.~Baum, P.~M.~Hajac, R.~Matthes, W.~Szyma\'nski,
{The K-theory of Heegaard-type quantum 3-spheres}, \emph{K-Theory}, {\bf 35} (2005), no. 1-2, 159--186.

\bibitem{BM}
P.~F.~Baum, R.~Meyer, {The Baum--Connes conjecture, localisation of categories, and quantum groups},
in: P.~M.~Hajac (ed.), Lecture Notes on Noncommutetive Geometry and Quantum Groups, EMS Publ. House, to appear.

\bibitem{B98}
B.~Blackadar.
{\em K-theory of operator algebras.}
2nd ed., Cambridge University Press, 1998. 

\bibitem{BF12} 
T. Brzezi\'nski, S. A. Fairfax, {Quantum Teardrops}, \emph{Commun. Math. Phys.}, {\bf 316} (2012), 151--170.

%\bibitem{BF15} T. Brzezi\'nski, S. A. Fairfax, {Notes on quantum weighted projective spaces and multidimensional
%teardrops}, \emph{J. Geom. Phys.}, {\bf 93} (2015), 1-10.

\bibitem{BS16} T. Brzezi\'{n}ski, W. Szyma\'{n}ski, {The C*-algebras of quantum lens and weighted projective spaces}, \emph{J. Noncomm. Geom.}, {in press}, arXiv:1603.04678.

\bibitem{CET12}
T. M. Carlsen, S. Eilers, M. Tomforde, {Index maps in the K-theory of graph algebras}, \emph{J. K-Theory}, \textbf{9} (2012), 385--406.

\bibitem{c-la66}
L.~A.~Coburn, Weyl's theorem for nonnormal operators, {\em Michigan Math. J.}, {\bf 13} (1966), 285--288.

\bibitem{Cu77}
J.~Cuntz, Simple $C\sp*$-algebras generated by isometries. {\em Comm. Math. Phys.}, {\bf 57} (1977), no. 2, 173--185.

\bibitem{Cu78}
J.~Cuntz, 
Murray--von Neumann equivalence of projections in infinite simple C*-algebras. 
{\em Rev. Roumaine Math. Pures Appl.}, {\bf 23} (1978), no. 7, 1011--1014.

\bibitem{Cu81K}
J.~Cuntz, $K$-theory for certain $C\sp*$-algebras. {\em Ann. of Math.}, {\bf 2} 113 (1981), no. 1, 181--197.

\bibitem{Cu81}
{J. Cuntz}, A class of C*-algebras and topological {M}arkov Chains {II}: reducible chains and the {E}xt-functor for  C*-algebras, \emph{Invent. Math.}, \textbf{63} (1981), 25--40.

\bibitem{CK80}
J.~Cuntz, W.~Krieger, 
{A class of {$C\sp{\ast}$}-algebras and topological {M}arkov chains}, \emph{Invent. Math.}, \textbf{56} (1980), 251--268.         

\bibitem{DHHMW12}
L.~Dabrowski, T.~Hadfield, P.~M.~Hajac, R.~Matthes, E.~Wagner,
Index pairings for pullback C*-algebras.
{\em Banach Center Publ.}, \textbf{98} (2012), 67--84.

%\bibitem{dy13}
%K. De Commer, Yamashita, M.,
%A construction of finite index $C^*$-algebra inclusions from free actions of compact quantum groups.
%{\em Publ. Res. Inst. Math. Sci.}, 49, 709--735, 2013.

%\bibitem{DL10}
%F.~D'Andrea, G.~Landi,
%Bounded and unbounded Fredholm modules for quantum projective spaces,
%{\em J. K-theory}, {\bf 6} (2010), 231--240, 2010.

\bibitem{DL14}
F. D'Andrea, G. Landi,
Quantum weighted projective and lens spaces,
{\em Commun. Math. Phys.}, {\bf 340} (2015), 325--353, 2015.

\bibitem{Dav96}
K.~R.~Davidson.
{\em C*-algebras by example}.
Fields Institute Monographs {\bf 6} (American Mathematical Society, Providence, RI, 1996).

\bibitem{Dr00}
D.~Drinen, Viewing AF-algebras as graph algebras, {\em Proc. Amer. Math. Soc.}, {\bf 128} (2000), no. 7, 1991--2000.

\bibitem{dt02}
D.~Drinen, M.~Tomforde. 
Computing $K$-theory and ${\rm Ext}$ for graph $C^*$-algebras. 
{\em Illinois J. Math.}, {\bf 46} (2002), no. 1, 81--91.

%\bibitem{Ell00}
%D.~A.~Ellwood. 
%A new characterisation of principal actions. 
%{\em J. Funct. Anal.}, 173 (1), 49--60, 2000.

%\bibitem{Ex94}
%R.~Exel. 
%Circle Actions on C*- Algebras, Partial Automorphisms, and a Generalized Pimsner-Voiculescu Exact Sequence.
%{\em J. Funct. Anal.}, 122 (2), 361--401, 1994.

\bibitem{FHMZ17}
C.~Farsi, P.~M.~Hajac, T.~Maszczyk, B.~Zieli{\'n}ski,
Rank-two Milnor idempotents for the multipullback quantum complex projective plane, 2015.
arXiv:1512.08816.

\bibitem{h-pm00}
P.~M.~Hajac.
Bundles over quantum sphere and noncommutative index theorem. 
{\em K-theory}, {\bf 21} (2000), 141--150.

\bibitem{HKT18}
P.~M.~Hajac, A.~Kaygun, M.~Tobolski,
A graded pullback structure of Leavitt path algebras of trimmable graphs, 2018.
arXiv:1803.10209.

\bibitem{hnpsz18}
P.~M.~Hajac, R.~Nest, D.~A.~Pask, A.~Sims, B.~Zieli\'nski.
The K-theory of twisted multipullback quantum odd spheres and complex projective spaces.
{\em J. Noncommut. Geom.}, (2018). DOI: 10.4171/JNCG/292.
%\bibitem{HKZ12}
%P.~M.~Hajac, A.~Kaygun, B.~Zieli\'nski, 
%Quantum complex projective spaces from Toeplitz cubes, 
%{\em J. Noncommut. Geom.},{\bf 6} (2012), 603--621.

%\bibitem{HM16}
%P.~M.~Hajac, T.~Maszczyk.
%Pullbacks and nontriviality of associated noncommutative vector bundles.
%To appear in {\em J. Noncommut. Geom.}.

%\bibitem{HRZ13}
%P.~M.~Hajac, A.~Rennie, B.~Zieli\'nski,
%The K-theory of Heegaard quantum lens spaces,
%{\em J.~Noncommut. Geom.}, {\bf 7} (2013), 1185--1216.

\bibitem{HW14}
P.~M.~Hajac, E.~Wagner,
The pullbacks of principal coactions,
{\em Doc. Math.}, {\bf 19} (2014), 1025--1060.

\bibitem{HL04}
E. Hawkins, G. Landi. 
Fredholm modules for quantum Euclidean spheres. {\em J. Geom. Phys.}, {\bf 49} (2004), 272--293.

\bibitem{HS02}
J.~H.~Hong, W.~Szyma\'nski,
Quantum spheres and projective spaces as graph algebras,
{\em Comm. Math. Phys.}, 232 (2002), 157--188.

\bibitem{HS03}
J.H.~Hong, W.~Szyma{\'n}ski, {Quantum lens spaces and graph algebras}, \emph{Pac. J. Math.}, {\bf 211} (2003), 249--263.

\bibitem{HS06}
J.~H.~Hong, W.~Szyma\'nski,
Noncommutative balls and their doubles,
{\em Czech J. Phys.}, {\bf 56} (2006), 1173--1178.

\bibitem{HS08}
J.~H.~Hong, W.~Szyma\'nski,
Noncommutative balls and mirror quantum spheres,
{\em J. Lond. Math. Soc.}, {\bf 77} (2008), 607--626.

%\bibitem{KPRR}
%A. Kumjian, D. Pask, I. Raeburn, and J. Reneault,
%Graphs, Groupoids, and Cuntz Krieger Algebras, {\em J. Funct. Anal.} 144, 505--541, 1997.

%\bibitem{KL92}
%S.~Klimek, A.~Le\'sniewski,
%Quantum Riemann surfaces I. The unit disc,
%{\em Comm. Math. Phys.}, {\bf 146} (1992), 103--122.

%\bibitem{Pa81}
%W. L. Paschke.
%K-theory for actions of the circle group on C*- algebras. 
%\emph{J. Oper. Th.}, 6 (1), 125--133, 1981.

\bibitem{PR96}
D.~Pask, I.~Raeburn, 
On the K-theory of Cuntz-Krieger algebras, 
{\em Publ. Res. Inst. Math. Sci.}, {\bf 32} (1996), no. 3, 415--443.

\bibitem{P99}
G.~K.~Pedersen,
Pullback and Pushout Constructions in C*-Algebra Theory, 
{\em J. Funct. Anal.},  {\bf 167} (1999), 243--344.

\bibitem{Pod87}
P.~Podle\'s, Quantum spheres, {\em Lett. Math. Phys.} {\bf 14} (1987), no. 3, 193--202. 

\bibitem{R05}
I.~Raeburn,
{\em Graph algebras}, CBMS Regional Conference Series in Mathematics {\bf 103}, American Mathematical Society, Providence, RI, 2005.

\bibitem{RTW04}
I.~Raeburn, M. Tomforde, D. Williams,
{ Classification theorems for the C*-algebras of graphs with sinks}, {\em Bull. Austral. Math. Soc.}, {\bf 70} (2004), 143--161.

\bibitem{Roe95}
M. R{\o}rdam, Classification of Cuntz--Krieger algebras, {\em K-theory}, {\bf 9} (1995), 31--58.

%\bibitem{Sz03}
%W. Szyma\'nski, 
%Quantum lens spaces and principal actions on graph $C^*$-algebras. 
%{\em Banach Center Publ.}, 61, 299--304, 2003.

\bibitem{VS91}
L.~L. Vaksman, Ya.~S. Soibelman,
Algebra of functions on the quantum group $\mathrm{SU(n+1)}$ and
  odd-dimensional quantum spheres,
{\em Algebra i Analiz}, {\bf 2} (1990), 101--120.

%\bibitem{W87}
%S.~L.~Woronowicz,
%Twisted $\mathrm{SU(2)}$ group. An example of a non-commutative differential calculus,
%{\em Publ. Res. Inst. Math. Sci.}, {\bf 23} (1987), 117--181.

\end{thebibliography}
\end{document}